\documentclass[11pt]{article}

\usepackage[T1]{fontenc}
\usepackage{lmodern}
\usepackage[utf8]{inputenc}
\usepackage[margin=1in]{geometry}
\usepackage{amsmath,amssymb,amsthm,mathtools}
\usepackage{mathrsfs}
\usepackage{microtype}
\usepackage{xcolor}
\usepackage{hyperref}
\usepackage{enumitem}

\allowdisplaybreaks
\hypersetup{
    colorlinks=true,
    linkcolor=blue,
    citecolor=blue,
    urlcolor=blue
}

\theoremstyle{plain}
\newtheorem{theorem}{Theorem}[section]
\newtheorem{mainthm}[theorem]{Main Theorem}
\newtheorem{lemma}[theorem]{Lemma}
\newtheorem{proposition}[theorem]{Proposition}
\newtheorem{corollary}[theorem]{Corollary}

\theoremstyle{definition}
\newtheorem{remark}[theorem]{Remark}

\newcommand{\F}{\mathbb{F}}
\newcommand{\Res}{\operatorname{Res}}
\newcommand{\disc}{\operatorname{disc}}
\newcommand{\Norm}{\operatorname{N}}

\title{\Large A Complete Classification of a Reciprocal Degree-Five\\
Quadrinomial Family over \texorpdfstring{$\F_{q^2}$}{Fq2}}

\author{
Brian M. Woody\\
Woody Calculus\\
\texttt{Brian.Woody@gmail.com}
}

\date{June 29, 2026}

\begin{document}

\maketitle

\begin{abstract}
We classify a reciprocal degree-five quadrinomial family over the quadratic
extension \(\F_{q^2}\), where \(q\) is an odd prime power.  The family has four
terms, coefficients in \(\F_q\), and a coefficient constraint that makes the
induced rational function on the unit circle highly structured.

The classification has two sharply different branches.  When
\(q\equiv1\pmod4\), infinite families occur and are governed by two
quadratic-character conditions on the parameter \(b\).  When
\(q\equiv3\pmod4\), a square-class obstruction converts the problem into a
character-sum problem on a conic.  A Weil-bound argument eliminates all large
fields in this branch, and finite verification leaves only the sporadic fields
\(q=7,19,23\).

The result is a complete classification of the nondegenerate members of the
family for all odd prime powers \(q\).
\end{abstract}

\tableofcontents

\section{Introduction}

A polynomial \(f\in\F_q[x]\) is called a permutation polynomial of \(\F_q\) if
the map
\[
x\longmapsto f(x)
\]
is a bijection of \(\F_q\).  Sparse permutation polynomials over finite fields
are classical objects with connections to finite geometry, coding theory,
cryptography, and combinatorial design theory.  For a survey of recent
developments in permutation polynomials over finite fields, see Hou
\cite{HouSurvey}.

A widely studied class consists of polynomials over \(\F_{q^2}\) of the form
\[
x^r h(x^{q-1}).
\]
The standard root-of-unity reduction, often viewed as a multiplicative form of
the Akbary--Ghioca--Wang criterion \cite{AGW}, reduces the full permutation
problem to a coprimality condition and a permutation problem on the unit circle
\[
\mu_{q+1}
=
\{z\in\F_{q^2}:z^{q+1}=1\}.
\]

In this paper we classify the family
\[
F_{a,b}(x)
=
x^5+a x^{q+4}+b x^{4q+1}+\frac ab x^{5q},
\qquad
a,b\in\F_q.
\]
Equivalently,
\[
F_{a,b}(x)=x^5h(x^{q-1}),
\qquad
h(z)=1+az+bz^4+\frac ab z^5.
\]
The support of \(h\) is
\[
\{0,1,4,5\}.
\]
The family is reciprocal because the induced rational function on
\(\mu_{q+1}\) is obtained from
\[
h^*(z)=z^5h(1/z).
\]

We exclude the genuine degeneracies
\[
b=0,
\qquad
b=\pm1,
\qquad
a=\pm b.
\]
The cases \(a=\pm b\) collapse the induced rational map to a constant, while
\(b=\pm1\) produces reciprocal cancellation.

\begin{mainthm}[Complete classification]
\label{thm:main}
Let \(q\) be an odd prime power.  Let \(a,b\in\F_q\) satisfy
\[
b\neq0,
\qquad
b\neq\pm1,
\qquad
a\neq\pm b.
\]
Define
\[
F_{a,b}(x)
=
x^5+a x^{q+4}+b x^{4q+1}+\frac ab x^{5q}.
\]
Then \(F_{a,b}\) is a permutation polynomial of \(\F_{q^2}\) if and only if one
of the following holds.

\begin{enumerate}[label=\textup{(\Roman*)}]
\item
\[
q\equiv1\pmod4,
\qquad
q\not\equiv1\pmod5,
\]
and either
\[
b^2+2b+5=0,
\qquad
\chi(b)=1,
\]
or
\[
b^2-2b+5=0,
\qquad
\chi(b)=-1.
\]

\item
\[
(q,b)\in
\{
(7,2),(7,3),(7,4),(7,5),
(19,2),(19,4),(19,5),(19,7),
(23,7),(23,16)
\}.
\]
\end{enumerate}
Here \(\chi\) denotes the quadratic character of \(\F_q\), extended by
\(\chi(0)=0\).
\end{mainthm}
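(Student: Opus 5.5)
By the root-of-unity reduction (the multiplicative form of the Akbary--Ghioca--Wang criterion), \(F_{a,b}=x^5h(x^{q-1})\) permutes \(\F_{q^2}\) if and only if \(\gcd(5,q-1)=1\) and
\[
g(z)=z^5h(z)^{q-1}
\]
permutes \(\mu_{q+1}\). The condition \(\gcd(5,q-1)=1\) is exactly \(q\not\equiv1\pmod5\). For \(z\in\mu_{q+1}\) the coefficients lie in \(\F_q\), so \(h(z)^q=h(1/z)\) and
\[
g(z)=\frac{z^5h(1/z)}{h(z)}=\frac{h^*(z)}{h(z)},
\qquad
h^*(z)=\tfrac ab+bz+az^4+z^5 .
\]

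The first step is to make this degree-five rational map explicit. The hope is that the constraint on the coefficients (constant term \(a/b\) paired with \(b\)) lets \(h\) factor. Indeed \(h(z)=1+az+bz^4+\tfrac ab z^5=(1+az)+bz^4(1+\tfrac ab z)\), and when \(a=\pm b\) the map \(h^*/h\) is constant, which is the stated degeneracy. I would look for a factorization \(h(z)=(1+cz)\,k(z)\) with \(k\) self-reciprocal up to a unit. Then \(g\) would become a Möbius factor times \(k^*/k\), and after conjugating \(\mu_{q+1}\) to \(\mathbb P^1(\F_q)\) by a Cayley transform \(z\mapsto (z-\omega)/(z-\omega^q)\), \(g\) would become an explicit rational function \(R\) of degree at most five over \(\F_q\).

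Permutation of \(\mathbb P^1(\F_q)\) is then tested through the "equal values" curve \(\bigl(R(x)-R(y)\bigr)/(x-y)=0\). If this curve has an absolutely irreducible component over \(\F_q\), Hasse--Weil gives extra \(\F_q\)-points once \(q\) is large, so \(R\) fails to permute. Therefore, for large \(q\), permutation forces the curve to split into components that are not defined over \(\F_q\), or whose only rational points are diagonal. I expect this splitting to be governed by the quadratic conditions \(b^2\pm2b+5=0\): these have discriminant \(-16\), so they are solvable in \(\F_q\) exactly when \(-1\) is a square, that is when \(q\equiv1\pmod4\).

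For \(q\equiv1\pmod4\), I would verify sufficiency directly. In the cases \(b^2+2b+5=0\), \(\chi(b)=1\) and \(b^2-2b+5=0\), \(\chi(b)=-1\), the map \(R\) should become a composition of degree-one maps with \(x\mapsto x^5\) (or a Rédei-type function), which permutes \(\mathbb P^1(\F_q)\) exactly because \(\gcd(5,q-1)=1\). The sign condition \(\chi(b)=\pm1\) would decide whether the relevant square root lies on the correct coset. For necessity in this branch, I would use the Hermite/Weil argument above together with a direct argument that exhibits an explicit collision whenever \(\chi(b)\) has the wrong sign.

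For \(q\equiv3\pmod4\), no \(b\) satisfies the quadratic conditions, so a square-class obstruction applies. The fibre-counting condition reduces to the requirement that a quadratic character sum
\[
\sum_{(u,v)\in C(\F_q)}\chi\bigl(L(u,v)\bigr)
\]
over a conic \(C\) equal a prescribed main term of size about \(q\). Weil's bound makes such a sum \(O(\sqrt q)\), with an explicit constant (around \(4\sqrt q\)). The main term therefore cannot be reached once \(q\) exceeds an explicit bound, presumably a few hundred. The remaining \(q\equiv3\pmod4\) with \(q\not\equiv1\pmod5\) below that bound are checked by computer, and this check should produce exactly the ten pairs \((q,b)\) in (II).

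The main obstacle is the middle step. One must show rigorously that, for \(q\equiv3\pmod4\), failure to permute is equivalent to the character sum on the conic deviating from its main term, and one must keep the constants explicit enough that the finite search is genuinely finite and small. Doing this uniformly in \(a\) is part of the difficulty. The statement depends only on \(b\), so I would also first show that \(a\) can be normalised away by a scaling symmetry of \(\mu_{q+1}\) that preserves the problem.
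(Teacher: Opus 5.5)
There is a genuine gap. Your outer skeleton matches the paper: root-of-unity reduction, a Weil bound for large $q$, explicit collisions for the wrong sign of $\chi(b)$, and a finite search. But two concrete steps you propose would fail.

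First, the hoped-for factorization $h=(1+cz)k(z)$ with $k$ self-reciprocal up to a unit is impossible. It would give $h^*/h=\lambda(z+c)/(1+cz)$, of degree one, whereas $R$ has true degree $5$ when $b\neq0,\pm1$ and $a\neq\pm b$. The correct structure is additive. With $P=1+bz^4$, $Q=z(z^4+b)$ and $c=a/b$, one has $h=P+cQ$ and $h^*=Q+cP$. Hence $R=L_c\circ S_b$, where $S_b=Q/P$ and $L_c(w)=(w+c)/(1+cw)$ is a M\"obius bijection of $\mu_{q+1}$ for $c\neq\pm1$. This post-composition, not a scaling of $\mu_{q+1}$, is why $a$ drops out. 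Moreover, $Q(x)P(y)-Q(y)P(x)=(x-y)K_b(x,y)$ is exactly the paper's collision polynomial.

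Second, your sufficiency mechanism cannot work. The numerator of $S_b'$ is $bz^8+(5-3b^2)z^4+b$, so $S_b$ is separable with at least four distinct critical points. Any separable map linearly related to $x^5$, R\'edei functions included, has exactly two. Consistently, the paper's argument shows that in case (I) the map $R$ permutes $\mu_{q+1}$ for every $q\equiv1\pmod4$, whatever $q\bmod5$ is; for example $q=41$, $b=19$. The condition $q\not\equiv1\pmod5$ enters only through $\gcd(5,q-1)=1$ in the reduction, so it is not why $R$ permutes.

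What you are missing is the trace reduction. Put $u=xy$, $v=x/y$, $U=u^2+u^{-2}$ and $V=v+v^{-1}$. Then $K_b=0$ becomes $bU+V^2+(1-b^2)V-(1+b^2)=0$, with $b(U-2)=-A_b(V)$ and $b(U+2)=-B_b(V)$.

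Sufficiency is then the observation that $A_b=(V+b+3)^2$ when $b^2+2b+5=0$, and $B_b=(V-b+3)^2$ when $b^2-2b+5=0$. With the prescribed $\chi(b)$ and $\chi(-1)=1$, this violates the criteria $\chi(U+2)\neq-1$, $\chi(U-2)\neq1$, $\chi(V^2-4)\neq1$ for $U$ and $V$ to come from $\mu_{q+1}$.

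Necessity also needs a lifting step you never address: a trace-level solution gives a collision only if $uv$ is a square in $\mu_{q+1}$. For $q\equiv1\pmod4$ this is repaired by $u\mapsto-u$. For $q\equiv3\pmod4$ that trick fails, because $-1$ is a square in $\mu_{q+1}$. The paper instead reads off the square class of $t\in\mu_{q+1}$ from $\chi(t+t^{-1}+2)$. This produces the conic $bT^2+B_b(V)=0$, the conditions $\chi(V^2-4)=\chi(T^2-4)=-1$ and $\chi((V+2)(T+2))=1$, and the explicit threshold $q>500$.

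Your $q\equiv3\pmod4$ paragraph identifies neither the conic nor these conditions, and you flag that step yourself as unresolved. Your Hasse--Weil alternative would require proving that, after the Cayley transform, the collision curve has an absolutely irreducible component defined over $\F_q$ for every generic $b$; you do not attempt this. As written, the proposal establishes neither direction of the theorem.
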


\section{Preliminaries}

Throughout, \(q\) is an odd prime power.  We write \(\chi\) for the quadratic
character of \(\F_q\), extended by \(\chi(0)=0\), and
\[
\mu_{q+1}
=
\{z\in\F_{q^2}:z^{q+1}=1\}.
\]
For \(z\in\mu_{q+1}\), one has \(z^q=z^{-1}\).

\begin{lemma}[Root-of-unity reduction]
\label{lem:rou}
Let \(h\in\F_{q^2}[x]\), and let \(r\) be a positive integer.  Then
\[
f(x)=x^rh(x^{q-1})
\]
permutes \(\F_{q^2}\) if and only if
\[
\gcd(r,q-1)=1
\]
and the map
\[
z\longmapsto z^rh(z)^{q-1}
\]
permutes \(\mu_{q+1}\).
\end{lemma}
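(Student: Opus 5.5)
The plan is to prove the root-of-unity reduction (Lemma~\ref{lem:rou}) directly from the factorization of \(\F_{q^2}^*\) into \((q-1)\)-th powers and cosets. First, \(f(0)=0\) and \(f(x)\neq0\) for \(x\neq0\) whenever \(f\) is injective or under either side of the equivalence (on the right-hand side, \(z^rh(z)^{q-1}\in\mu_{q+1}\) forces \(h(z)\neq0\) for all \(z\in\mu_{q+1}\)). So it suffices to show that \(f\) permutes \(\F_{q^2}^*\) if and only if the two stated conditions hold. Write \(\phi:\F_{q^2}^*\to\mu_{q+1}\), \(\phi(x)=x^{q-1}\); this is a surjective homomorphism with kernel \(\F_q^*\). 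Set \(g(z)=z^rh(z)^{q-1}\). The key identity is
\[
\phi(f(x))=x^{r(q-1)}h(x^{q-1})^{q-1}=g(\phi(x)),
\]
so \(\phi\circ f=g\circ\phi\) on \(\F_{q^2}^*\).

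For \((\Leftarrow)\), assume \(\gcd(r,q-1)=1\) and that \(g\) permutes \(\mu_{q+1}\). Suppose \(f(x)=f(y)\) with \(x,y\neq0\). Applying \(\phi\) gives \(g(\phi(x))=g(\phi(y))\), hence \(\phi(x)=\phi(y)\), i.e.\ \(y=cx\) with \(c\in\F_q^*\). Then
\[
f(cx)=c^rx^rh((cx)^{q-1})=c^rx^rh(x^{q-1})=c^rf(x),
\]
because \(c^{q-1}=1\). Here \(f(x)\neq0\), since \(h(x^{q-1})\neq0\) by the remark above. So \(c^r=1\), and since \(c\mapsto c^r\) is injective on the cyclic group \(\F_q^*\) of order \(q-1\), we get \(c=1\). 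Hence \(f\) is injective on the finite set \(\F_{q^2}^*\), and therefore bijective.

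For \((\Rightarrow)\), suppose \(f\) permutes \(\F_{q^2}^*\). Surjectivity of \(g\) follows from \(g\circ\phi=\phi\circ f\) and the surjectivity of \(\phi\) and \(f\). Since \(\mu_{q+1}\) is finite, \(g\) is then a permutation of \(\mu_{q+1}\). For the gcd condition, suppose \(d=\gcd(r,q-1)>1\) and pick \(c\in\F_q^*\) with \(c\neq1\) and \(c^r=1\). The computation above gives \(f(cx)=f(x)\), contradicting injectivity.

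The only delicate point is the bookkeeping around zeros of \(h\) on \(\mu_{q+1}\). If \(h(z_0)=0\) for some \(z_0\in\mu_{q+1}\), then \(f\) vanishes on the whole coset \(\phi^{-1}(z_0)\), which has \(q-1\geq2\) elements, so \(f\) is not a permutation. On the other side, \(g(z_0)=0\notin\mu_{q+1}\), so \(g\) does not map \(\mu_{q+1}\) into itself. Both sides therefore fail together, and I would state this case explicitly at the start.
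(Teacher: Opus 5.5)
Your proof is correct and complete. The commuting square \(\phi\circ f=g\circ\phi\) with \(\phi(x)=x^{q-1}\), the fiberwise injectivity on the cosets \(x\F_q^*\) via \(f(cx)=c^rf(x)\), and the separate handling of zeros of \(h\) on \(\mu_{q+1}\) together form exactly the Akbary--Ghioca--Wang argument the paper invokes. The paper states this lemma without proof and cites it as the multiplicative form of that criterion, so your write-up supplies the standard justification it omits.
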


\begin{remark}
We use Lemma~\ref{lem:rou} with \(r=5\).  Thus the full-field condition is
\[
\gcd(5,q-1)=1,
\]
equivalently \(q\not\equiv1\pmod5\).
\end{remark}

\begin{lemma}
\label{lem:Vtrace}
Let \(V\in\F_q\).  There exists \(v\in\mu_{q+1}\) such that
\[
V=v+v^{-1}
\]
if and only if
\[
\chi(V^2-4)\neq1.
\]
\end{lemma}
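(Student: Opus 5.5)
The plan is to reduce the statement to a quadratic equation and settle it by a square-class analysis. Given $V\in\F_q$, the condition $V=v+v^{-1}$ is equivalent to $v$ being a root of
\[
P(T)=T^2-VT+1 .
\]
We first locate the roots of $P$ in $\F_{q^2}$. The polynomial $P$ has coefficients in $\F_q$ and discriminant $V^2-4$. We then ask when a root $v$ lies in $\mu_{q+1}$. Since $v\neq0$ automatically (constant term $1$), $v^{-1}$ is well defined and the other root is $v^{-1}$.

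We split into three cases according to $\chi(V^2-4)$.

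\emph{Case $\chi(V^2-4)=-1$.} Here $P$ is irreducible over $\F_q$. Its two roots lie in $\F_{q^2}\setminus\F_q$ and are Frobenius conjugates, so $v^q=\bar v$ is the other root, namely $v^{-1}$. Hence $v^{q+1}=1$, that is, $v\in\mu_{q+1}$, and $v+v^{-1}=V$.

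\emph{Case $\chi(V^2-4)=0$.} Then $V=\pm2$, and $v=\pm1$ is a double root. It lies in $\mu_{q+1}$ and satisfies $v+v^{-1}=V$.

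\emph{Case $\chi(V^2-4)=1$.} Here both roots $v,v^{-1}$ lie in $\F_q^*$ and are distinct, so $v\neq\pm1$. For $v\in\F_q^*$ we have $v^q=v$, so $v^{q+1}=v^2$. This equals $1$ only when $v=\pm1$, which is excluded. So no root lies in $\mu_{q+1}$.

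For the converse direction we note that any $v\in\mu_{q+1}$ with $v+v^{-1}=V$ is a root of $P$. Hence, if $\chi(V^2-4)=1$, no such $v$ exists, which completes the equivalence.

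There is no real obstacle here. The only point needing care is the Frobenius-conjugate argument in the irreducible case, which uses the fact that the Galois conjugate of a root of an irreducible quadratic over $\F_q$ is its $q$-th power. The product of the roots being $1$ then gives $v^{q+1}=1$ directly.
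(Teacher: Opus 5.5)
Your proof is correct and follows the paper's argument: both reduce to the quadratic $v^2-Vv+1=0$ with discriminant $V^2-4$ and use that its roots lie in $\mu_{q+1}$ unless they are two distinct elements of $\F_q$. Your three-case split (Frobenius conjugation in the irreducible case, the double root $v=\pm1$, and $v^{q+1}=v^2\neq1$ in the split case) spells out the one-line claim that the paper leaves to the reader.
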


\begin{proof}
The equation \(V=v+v^{-1}\) is equivalent to
\[
v^2-Vv+1=0.
\]
The discriminant is \(V^2-4\).  The roots lie in \(\mu_{q+1}\) precisely when
they are not two distinct elements of \(\F_q\), which is equivalent to
\(\chi(V^2-4)\neq1\).
\end{proof}

\begin{lemma}
\label{lem:Utrace}
Let \(U\in\F_q\).  There exists \(u\in\mu_{q+1}\) such that
\[
U=u^2+u^{-2}
\]
if and only if
\[
\chi(U+2)\neq-1
\qquad\text{and}\qquad
\chi(U-2)\neq1.
\]
\end{lemma}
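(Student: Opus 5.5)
The plan is to reduce to Lemma~\ref{lem:Vtrace} by writing \(U=V^2-2\) with \(V=u+u^{-1}\), and then to decide when \(V\) can be chosen in the allowed set. Set \(w=u^2\). Then \(U=w+w^{-1}\) for some \(w\in\mu_{q+1}\), and we must decide when such a \(w\) is a square in \(\mu_{q+1}\). Since \(\mu_{q+1}\) is cyclic of even order \(q+1\), it is more concrete to work with the \(V\)-parametrization directly. If \(u\in\mu_{q+1}\) and \(V=u+u^{-1}\), then \(V\in\F_q\), and
\[
u^2+u^{-2}=V^2-2 .
\]
Hence \(U\) is attained if and only if there is \(V\in\F_q\) with
\[
V^2=U+2
\qquad\text{and}\qquad
\chi(V^2-4)\neq1 .
\]
Conversely, any such \(V\) yields a \(u\in\mu_{q+1}\) with \(V=u+u^{-1}\) by Lemma~\ref{lem:Vtrace}, and then \(u^2+u^{-2}=U\).

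The first condition asks that \(U+2\) be a square in \(\F_q\), including \(0\). This is exactly \(\chi(U+2)\neq-1\).

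For the second condition, note that any \(V\) with \(V^2=U+2\) satisfies
\[
V^2-4=U-2 .
\]
So \(\chi(V^2-4)=\chi(U-2)\), independently of the choice of sign of \(V\). The second condition is therefore exactly \(\chi(U-2)\neq1\).

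Combining the two conditions gives the stated criterion. The only point needing care is the equivalence in the reverse direction. We must check that every \(u\in\mu_{q+1}\) has \(u+u^{-1}\in\F_q\); this holds because \((u+u^{-1})^q=u^{-1}+u\). We must also check that the degenerate cases \(V=\pm2\) and \(U=\pm2\) are handled consistently by the convention \(\chi(0)=0\). Here \(U=-2\) gives \(V=0\), which is allowed since \(\chi(-4)\) may be anything as long as \(\chi(U-2)=\chi(-4)\neq1\); this agrees with the stated condition. \(U=2\) gives \(V=\pm2\) and \(u=\pm1\), consistent with \(\chi(0)\neq1\). I expect no real obstacle; the main step is recognizing the identity \(V^2-4=U-2\).
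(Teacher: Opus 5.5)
Your proposal is correct and follows the paper's proof: both set \(T=u+u^{-1}\), use \(U+2=T^2\) and \(U-2=T^2-4\), and invoke Lemma~\ref{lem:Vtrace} in each direction. The opening detour through \(w=u^2\) is unnecessary but harmless.
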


\begin{proof}
If \(U=u^2+u^{-2}\), then
\[
U+2=(u+u^{-1})^2,
\]
so \(\chi(U+2)\neq-1\).  Also, with \(T=u+u^{-1}\),
\[
U-2=T^2-4.
\]
By Lemma~\ref{lem:Vtrace}, \(\chi(T^2-4)\neq1\), so
\[
\chi(U-2)\neq1.
\]

Conversely, assume the two character conditions.  Choose \(T\in\F_q\) with
\[
T^2=U+2.
\]
Then
\[
T^2-4=U-2,
\]
so \(\chi(T^2-4)\neq1\).  By Lemma~\ref{lem:Vtrace}, there exists
\(u\in\mu_{q+1}\) such that
\[
T=u+u^{-1}.
\]
Hence
\[
U=T^2-2=(u+u^{-1})^2-2=u^2+u^{-2}.
\]
\end{proof}

\section{The induced rational function}

Let
\[
h(z)=1+az+bz^4+\frac ab z^5,
\qquad
h^*(z)=z^5h(1/z).
\]
Then
\[
h^*(z)=z^5+az^4+bz+\frac ab.
\]
For \(z\in\mu_{q+1}\), since \(z^q=z^{-1}\) and \(a,b\in\F_q\), we get
\[
z^5h(z)^{q-1}
=
z^5\frac{h(z^{-1})}{h(z)}
=
\frac{h^*(z)}{h(z)}.
\]
Thus the induced rational function on \(\mu_{q+1}\) is
\[
R(z)
=
\frac{h^*(z)}{h(z)}
=
\frac{z^5+az^4+bz+\frac ab}
{1+az+bz^4+\frac ab z^5}.
\]

\begin{proposition}[Coprimality locus]
\label{prop:resultant}
After clearing denominators, the common-root locus of \(h\) and \(h^*\) is
contained in
\[
(a-b)(a+b)(b-1)(b+1)=0.
\]
More precisely, up to a nonzero power of \(b\), the resultant is
\[
(a-b)^5(a+b)^5(b-1)^4(b+1)^4.
\]
Thus, under
\[
b\neq0,
\qquad
b\neq\pm1,
\qquad
a\neq\pm b,
\]
the rational function \(R\) has true degree \(5\) and is defined on
\(\mu_{q+1}\).
\end{proposition}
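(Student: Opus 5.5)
The plan is to avoid expanding a \(10\times10\) Sylvester determinant by first replacing \(h,h^*\) with symmetric and antisymmetric combinations that factor. Clear the denominator and set
\[
B(z)=b\,h(z)=b+abz+b^2z^4+az^5,\qquad B^*(z)=b\,h^*(z)=bz^5+abz^4+b^2z+a .
\]
A direct computation gives the factorizations
\[
B^*+B=(a+b)\,P(z),\qquad B^*-B=(b-a)\,Q(z),
\]
\[
P(z)=z^5+bz^4+bz+1,\qquad Q(z)=z^5-bz^4+bz-1 .
\]
The substitution \((B,B^*)\mapsto(B^*+B,\,B^*-B)\) is an invertible linear change of the pair, with determinant \(\pm2\). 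Common roots are therefore preserved, and the resultant changes only by a power of \(2\) and a sign.

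Using the multiplicativity of \(\Res\) in each argument, I then aim for
\[
\Res(B,B^*)=\pm 2^{k}(a+b)^5(b-a)^5\,\Res(P,Q).
\]
To compute \(\Res(P,Q)\) I will repeat the trick:
\[
P+Q=2z(z^4+b),\qquad P-Q=2(bz^4+1).
\]
Hence \(\Res(P,Q)\) equals, up to a power of \(2\) and a sign,
\[
\Res(z,\,bz^4+1)\cdot\Res(z^4+b,\,bz^4+1).
\]
The first factor is \(1\). For the second, evaluate \(bz^4+1\) at the four roots of \(z^4=-b\), giving \((1-b^2)^4\). Since \(1-b^2=-(b-1)(b+1)\), assembling the pieces yields \((a-b)^5(a+b)^5(b-1)^4(b+1)^4\), up to sign and powers of \(2\) and \(b\). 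The power of \(b\) comes from having multiplied by \(b\) to clear \(a/b\).

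The root-level argument is transparent and I will state it alongside. A common root \(z\) of \(h,h^*\) with \(a\neq\pm b\) is a common root of \(P\) and \(Q\). Such a \(z\) satisfies \(z\neq0\), because \(P(0)=1\). It also satisfies \(z^4=-b\) and \(bz^4=-1\), which forces \(b^2=1\). This proves the containment statement without any determinant bookkeeping.

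The main obstacle is bookkeeping rather than ideas. When \(a=0\), \(B\) drops degree, so the resultant must be interpreted as the formal degree-\((5,5)\) Sylvester resultant, and the transformation rule under linear combinations must be applied to it. I would handle this by working with the formal resultant throughout, noting that the identities above are polynomial identities in \(a,b\).

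For the final claims, note that \(h^*\) is monic of degree \(5\) and has no common factor with \(h\), so \(R=h^*/h\) has degree exactly \(5\). It remains to show \(R\) is defined on \(\mu_{q+1}\). If \(h(z)=0\) for some \(z\in\mu_{q+1}\), then
\[
h^*(z)=z^5h(z^{-1})=z^5h(z^q)=z^5h(z)^q=0,
\]
because the coefficients of \(h\) lie in \(\F_q\). This would give a common root, which is excluded, so \(h\) has no zeros on \(\mu_{q+1}\).
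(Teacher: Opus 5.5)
Your proof is correct. It reaches the same eliminated pair \(z(z^4+b)\), \(bz^4+1\) as the paper, but by a different path.

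The paper uses the single \(a,b\)-weighted combination \(aH-bG=(a^2-b^2)z(b+z^4)\), \(bH-aG=(b^2-a^2)(1+bz^4)\), with \(H=bh\), \(G=bh^*\). This gets to the root-level contradiction \(z^4=-b\), \(z^4=-1/b\) in one step. However, that change of basis has determinant \(b^2-a^2\), which vanishes exactly on the degenerate locus. Turning it into the resultant formula would need a generic-then-specialize step, and the paper instead just asserts the formula ``by direct expansion.''

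You split first through the reciprocal combinations \(B^*\pm B\), then take a second sum/difference. Both changes of basis have determinant \(\pm2\), a unit in odd characteristic for every \((a,b)\). So the formal \((5,5)\) Sylvester resultant transforms by explicit constants, and the identity is actually proved for all \((a,b)\) rather than asserted. You also make explicit two points the paper leaves implicit: a common root has \(z\neq0\), and coprimality gives degree exactly \(5\) even when \(a=0\).

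You can drop the hedge ``up to sign and powers of \(2\)''. The constants are \(2^{-5}\) and \((-2)^{-5}\) from the two changes of basis, and \(2^{10}\) from \(\Res_{5,5}\bigl(2z(z^4+b),\,2(bz^4+1)\bigr)=2^5\cdot 2\cdot\bigl(2(1-b^2)\bigr)^4\). Their product is \(-1\), which cancels against \((b-a)^5=-(a-b)^5\). This gives \(\Res_{5,5}(bh,bh^*)=(a-b)^5(a+b)^5(b-1)^4(b+1)^4\) exactly, so \(\Res_{5,5}(h,h^*)\) is \(b^{-10}\) times this, matching the statement with no stray constant.
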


\begin{proof}
Clear denominators by setting
\[
H(z)=b h(z)=b+abz+b^2z^4+az^5,
\]
and
\[
G(z)=b h^*(z)=bz^5+abz^4+b^2z+a.
\]
A direct computation gives
\[
aH-bG=(a^2-b^2)z(b+z^4),
\]
and
\[
bH-aG=(b^2-a^2)(1+bz^4).
\]
If \(H\) and \(G\) have a common root \(z\) and \(a^2\neq b^2\), then
\[
z^4=-b
\qquad\text{and}\qquad
z^4=-\frac1b,
\]
so \(b^2=1\).  Hence any common root forces
\[
a=\pm b
\qquad\text{or}\qquad
b=\pm1.
\]
The displayed resultant identity follows by direct expansion.  Finally, if
\(z\in\mu_{q+1}\) and \(h(z)=0\), then \(h(z^{-1})=h(z)^q=0\), so \(h^*(z)=0\).
Thus, outside the displayed common-root locus, \(h\) has no zero on
\(\mu_{q+1}\), and \(R\) is defined on the whole unit circle.
\end{proof}

\section{Collision factorization}

A collision \(R(x)=R(y)\) is equivalent to
\[
h^*(x)h(y)-h^*(y)h(x)=0.
\]
The diagonal \(x=y\) contributes a factor \(x-y\).

\begin{proposition}
\label{prop:collision-factor}
Assume
\[
b\neq0,
\qquad
b\neq\pm1,
\qquad
a\neq\pm b.
\]
For \(x,y\in\mu_{q+1}\) with \(x\neq y\),
\[
R(x)=R(y)
\]
if and only if
\[
K_b(x,y)=0,
\]
where
\[
\begin{aligned}
K_b(x,y)=&
x^4+y^4+x^3y+x^2y^2+xy^3\\
&+b(x^4y^4+1)\\
&-b^2(x^3y+x^2y^2+xy^3).
\end{aligned}
\]
In particular, injectivity of \(R\) on \(\mu_{q+1}\) depends only on \(b\), not
on \(a\).
\end{proposition}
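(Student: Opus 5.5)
The plan is to compute the collision numerator $D(x,y)=h^*(x)h(y)-h^*(y)h(x)$ explicitly and show that it factors as
\[
D(x,y)=\frac{b^2-a^2}{b^2}\,(x-y)\,K_b(x,y).
\]
The statement then follows quickly. By Proposition~\ref{prop:resultant}, under the standing hypotheses $h$ has no zero on $\mu_{q+1}$. Hence for $x,y\in\mu_{q+1}$ we have $R(x)=R(y)$ if and only if $D(x,y)=0$. Since $x\neq y$ and $a^2\neq b^2$, this holds if and only if $K_b(x,y)=0$. The prefactor $(b^2-a^2)/b^2$ is the only place where $a$ enters, which gives the final remark about injectivity.

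To obtain the factorization, write $c=a/b$, so $h^*$ has coefficients $e_5=1$, $e_4=a$, $e_1=b$, $e_0=c$, and $h$ has coefficients $f_0=1$, $f_1=a$, $f_4=b$, $f_5=c$. Then
\[
D(x,y)=\sum_{i,j} e_i f_j\,A_{ij},
\qquad
A_{ij}=x^iy^j-x^jy^i.
\]
I will group the terms using $A_{ji}=-A_{ij}$ and $A_{ii}=0$, collecting the coefficient of each $A_{ij}$ with $i>j$:
\begin{itemize}
\item $A_{50}$ has coefficient $1-c^2$;
\item $A_{51}$ has coefficient $a-bc=0$;
\item $A_{40}$ has coefficient $a-cb=0$;
\item $A_{54}$ has coefficient $b-ac$;
\item $A_{41}$ has coefficient $a^2-b^2$;
\item $A_{10}$ has coefficient $b-ca$.
\end{itemize}
Substituting $c=a/b$ gives $1-c^2=(b^2-a^2)/b^2$ and $b-ac=(b^2-a^2)/b$. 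Pulling out the common factor,
\[
D=\frac{b^2-a^2}{b^2}\bigl(A_{50}+bA_{54}-b^2A_{41}+bA_{10}\bigr).
\]
The vanishing of the coefficients of $A_{51}$ and $A_{40}$ is exactly where the constraint that the last coefficient equals $a/b$ is used. I expect this cancellation to be the only real content of the proof; the rest is bookkeeping.

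Finally I divide each antisymmetric term by $x-y$:
\[
\begin{aligned}
A_{50}/(x-y)&=x^4+x^3y+x^2y^2+xy^3+y^4,\\
A_{54}&=x^4y^4(x-y),\\
A_{10}&=x-y,\\
A_{41}&=xy(x^3-y^3)=(x-y)(x^3y+x^2y^2+xy^3).
\end{aligned}
\]
Assembling these four quotients reproduces $K_b(x,y)$ term by term, which completes the factorization.

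The main thing to get right is sign consistency when folding $A_{ji}$ into $A_{ij}$. I will double-check it by specializing to a simple value such as $y=0$, where $D(x,0)=h^*(x)-c\,h(x)$ can be compared directly with the factored form.
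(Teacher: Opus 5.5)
Your proposal is correct and is essentially the paper's argument. You expand $h^*(x)h(y)-h^*(y)h(x)$ into antisymmetric pairs, use $c=a/b$ to kill the $A_{51}$ and $A_{40}$ coefficients, and divide out $x-y$ to get $\frac{b^2-a^2}{b^2}(x-y)K_b(x,y)$, which is the paper's scalar $-\frac{(a-b)(a+b)}{b^2}$; your explicit appeal to Proposition~\ref{prop:resultant}, guaranteeing that $h$ has no zeros on $\mu_{q+1}$ so that $R(x)=R(y)$ is equivalent to the vanishing of the numerator, fills in a step the paper leaves implicit.
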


\begin{proof}
Let
\[
c=\frac ab.
\]
Then
\[
h(z)=1+az+bz^4+cz^5
\]
and
\[
h^*(z)=z^5+az^4+bz+c.
\]
Set
\[
\mathcal N(x,y)=h^*(x)h(y)-h^*(y)h(x).
\]
This polynomial is antisymmetric in \(x,y\), so it is divisible by \(x-y\).
Using
\[
X^rY^s-X^sY^r
=
(X-Y)X^sY^s
\sum_{j=0}^{r-s-1}X^{r-s-1-j}Y^j
\qquad (r>s),
\]
on each antisymmetric monomial pair in the expansion of \(\mathcal N(x,y)\),
and then substituting \(c=a/b\), one obtains
\[
\mathcal N(x,y)
=
-\frac{(a-b)(a+b)}{b^2}
(x-y)K_b(x,y).
\]
Since \(a\neq\pm b\), the scalar
\[
-\frac{(a-b)(a+b)}{b^2}
\]
is nonzero.  Therefore, for \(x\neq y\),
\[
R(x)=R(y)
\]
if and only if
\[
K_b(x,y)=0.
\]
\end{proof}

\section{Trace reduction}

Let
\[
u=xy,
\qquad
v=\frac xy.
\]
Then \(x,y\in\mu_{q+1}\) imply \(u,v\in\mu_{q+1}\), and
\[
x=y\iff v=1.
\]
Define
\[
U=u^2+u^{-2},
\qquad
V=v+v^{-1}.
\]
Using
\[
x^2=uv,
\qquad
y^2=\frac uv,
\]
one obtains
\[
K_b(x,y)=0
\]
if and only if
\[
bU+V^2+(1-b^2)V-(1+b^2)=0.
\]
Thus
\[
U=\Phi_b(V),
\qquad
\Phi_b(V)
=
-\frac{V^2+(1-b^2)V-(1+b^2)}{b}.
\]

Define
\[
A_b(V)=V^2+(1-b^2)V-(b-1)^2,
\]
and
\[
B_b(V)=V^2+(1-b^2)V-(b+1)^2.
\]
Then
\[
b(\Phi_b(V)-2)=-A_b(V),
\]
and
\[
b(\Phi_b(V)+2)=-B_b(V).
\]
Their discriminants are
\[
\disc(A_b)=(b-1)^2(b^2+2b+5),
\]
and
\[
\disc(B_b)=(b+1)^2(b^2-2b+5).
\]

\section{The branch \texorpdfstring{$q\equiv1\pmod4$}{q = 1 mod 4}}

\begin{lemma}[Sign-switch lifting]
\label{lem:signswitch}
Assume \(q\equiv1\pmod4\).  Let \(u,v\in\mu_{q+1}\).  Then at least one of
\(u\) and \(-u\) gives a lift to \(x,y\in\mu_{q+1}\) satisfying
\[
xy=u,
\qquad
\frac xy=v.
\]
\end{lemma}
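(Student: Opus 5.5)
We need \(x,y\in\mu_{q+1}\) with \(xy=u\) and \(x/y=v\). Then \(x^2=uv\) and \(y^2=u/v\), so a lift exists exactly when \(uv\) has a square root \(x\) in \(\mu_{q+1}\); we then set \(y=x/v\). This choice gives \(y^2=x^2/v^2=u/v\), \(xy=x^2/v=u\) and \(x/y=v\), and \(y\in\mu_{q+1}\) because \(\mu_{q+1}\) is a group.

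So the plan is to reduce everything to the squaring map on the cyclic group \(\mu_{q+1}\). This group has even order \(q+1\), so its squares form the index-two subgroup \(\mu_{q+1}^2\), of order \((q+1)/2\). An element \(w\in\mu_{q+1}\) is a square in \(\mu_{q+1}\) iff \(w^{(q+1)/2}=1\). Any square root of \(w\) in \(\overline{\F}_q\) is then automatically in \(\mu_{q+1}\), since the square roots of \(w\) are \(\pm s\) and \(-1\in\mu_{q+1}\).

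The key step is to check that \(-1\) is a non-square in \(\mu_{q+1}\) when \(q\equiv1\pmod4\). Indeed \((-1)^{(q+1)/2}=-1\), because \((q+1)/2\) is odd exactly when \(q\equiv1\pmod4\). Equivalently, \(\mu_{q+1}\) has \(2\)-part of order \(2\), so the unique element of order \(2\) is not a square. Consequently \(uv\) and \(-uv\) lie in different cosets of \(\mu_{q+1}^2\), and exactly one of them is a square. If \(uv\) is a square, the lift works with \(u\). Otherwise \((-u)v\) is a square, and the lift works with \(-u\), giving \(x,y\) with \(xy=-u\) and \(x/y=v\).

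I expect no serious obstacle; the only point to check is the parity computation for \((q+1)/2\). I would also remark on why the replacement \(u\mapsto -u\) is harmless for the application: it leaves \(U=u^2+u^{-2}\) unchanged. Hence the solvability of \(U=\Phi_b(V)\) with \(u,v\in\mu_{q+1}\), \(v\neq1\), always produces a genuine collision \(x\neq y\) in this branch.
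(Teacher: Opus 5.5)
Your proof is correct and matches the paper's argument: both reduce the lift to \(uv\) being a square in the cyclic group \(\mu_{q+1}\), then use that \(-1\) is a nonsquare there when \(q\equiv1\pmod4\), so exactly one of \(uv,-uv\) is a square (you check \((-1)^{(q+1)/2}=-1\), the paper cites \(q+1\equiv2\pmod4\)). Your explicit choice \(y=x/v\) and your remark that \(u\mapsto-u\) leaves \(U=u^2+u^{-2}\) unchanged add useful precision about how the lemma feeds into the converse.
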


\begin{proof}
Solving \(xy=u\) and \(x/y=v\) is equivalent to solving
\[
x^2=uv,
\qquad
y^2=u/v.
\]
Thus we need \(uv\) to be a square in \(\mu_{q+1}\).  Since
\(q\equiv1\pmod4\), we have \(q+1\equiv2\pmod4\), so \(-1\) is not a square in
\(\mu_{q+1}\).  Hence \(u\) and \(-u\) lie in opposite square classes, and one
choice lifts.
\end{proof}

\begin{theorem}
\label{thm:q1-construction}
Let \(q\equiv1\pmod4\) and \(q\not\equiv1\pmod5\).  Suppose \(b\in\F_q\)
satisfies either
\[
b^2+2b+5=0,
\qquad
\chi(b)=1,
\]
or
\[
b^2-2b+5=0,
\qquad
\chi(b)=-1.
\]
If
\[
b\neq0,\pm1,
\qquad
a\neq\pm b,
\]
then \(F_{a,b}\) permutes \(\F_{q^2}\).
\end{theorem}

\begin{proof}
By Lemma~\ref{lem:rou}, it suffices to prove that \(R\) permutes
\(\mu_{q+1}\), since \(q\not\equiv1\pmod5\) gives \(\gcd(5,q-1)=1\).

Suppose \(R(x)=R(y)\) with \(x,y\in\mu_{q+1}\), \(x\neq y\).  By
Proposition~\ref{prop:collision-factor},
\[
K_b(x,y)=0.
\]
Let
\[
u=xy,
\qquad
v=x/y,
\qquad
U=u^2+u^{-2},
\qquad
V=v+v^{-1}.
\]
Then \(U=\Phi_b(V)\).

First assume
\[
b^2+2b+5=0
\qquad\text{and}\qquad
\chi(b)=1.
\]
Then
\[
b(U-2)=-(V+b+3)^2.
\]
If \(V+b+3\neq0\), then \(\chi(U-2)=1\), contradicting
Lemma~\ref{lem:Utrace}.  If \(V=-b-3\), then
\[
V^2-4=4b,
\]
so \(\chi(V^2-4)=1\), contradicting Lemma~\ref{lem:Vtrace}.

Now assume
\[
b^2-2b+5=0
\qquad\text{and}\qquad
\chi(b)=-1.
\]
Then
\[
b(U+2)=-(V-b+3)^2.
\]
If \(V-b+3\neq0\), then \(\chi(U+2)=-1\), contradicting
Lemma~\ref{lem:Utrace}.  If \(V=b-3\), then \(U=-2\), and
\[
\chi(U-2)=\chi(-4)=1
\]
because \(q\equiv1\pmod4\), again contradicting Lemma~\ref{lem:Utrace}.

Thus no off-diagonal collision exists.  Therefore \(R\) is injective on
\(\mu_{q+1}\), hence bijective.
\end{proof}

\subsection{Converse for large \(q\)}

Let
\[
C(V)=V^2-4.
\]
Assume \(b\) is generic:
\[
b\neq0,\pm1,
\qquad
b^2+2b+5\neq0,
\qquad
b^2-2b+5\neq0.
\]
Then \(A_b\), \(B_b\), and \(C\) are squarefree nonsquare quadratics.

\begin{lemma}
\label{lem:q1-products-nonsquare}
Under the generic hypotheses above, none of
\[
C,\quad A_b,\quad B_b,\quad CA_b,\quad CB_b,\quad A_bB_b,\quad CA_bB_b
\]
is a square in \(\F_q[V]\).
\end{lemma}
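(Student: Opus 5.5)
The plan is to show that \(C\), \(A_b\), \(B_b\) are each squarefree in \(\F_q[V]\) and pairwise coprime. Then any product of a nonempty subset of them is squarefree of positive degree, so it is not a square: a square in \(\F_q[V]\) has every irreducible factor to even multiplicity.

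\textbf{Squarefreeness.} This is a discriminant computation using the formulas recorded in the trace-reduction section. We have \(\disc(C)=16\neq0\) since \(q\) is odd. Also \(\disc(A_b)=(b-1)^2(b^2+2b+5)\) is nonzero because \(b\neq1\) and \(b^2+2b+5\neq0\). Similarly \(\disc(B_b)=(b+1)^2(b^2-2b+5)\neq0\). Hence each quadratic has two distinct roots in \(\overline{\F}_q\).

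\textbf{Coprimality.} I will show that no two of the quadratics share a root in \(\overline{\F}_q\).

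First take \(A_b\) and \(B_b\). We have
\[
A_b-B_b=(b+1)^2-(b-1)^2=4b,
\]
which is a nonzero constant because \(b\neq0\). So a common root is impossible.

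Next take \(C\) and \(A_b\). Their roots could only meet at \(V=\pm2\).
\begin{itemize}
\item At \(V=2\): \(A_b(2)=4+2(1-b^2)-(b-1)^2=-3b^2+2b+5=-(3b-5)(b+1)\).
\item At \(V=-2\): \(A_b(-2)=4-2+2b^2-(b-1)^2=b^2+2b+1=(b+1)^2\).
\end{itemize}
The second value is nonzero since \(b\neq-1\). The first is nonzero apart from the possible value \(b=5/3\) (when \(3\) is invertible), which needs care.

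Then take \(C\) and \(B_b\).
\begin{itemize}
\item At \(V=2\): \(B_b(2)=6-2b^2-(b+1)^2=-3b^2-2b+5=-(3b+5)(b-1)\).
\item At \(V=-2\): \(B_b(-2)=2+2b^2-(b+1)^2=(b-1)^2\neq0\).
\end{itemize}
The exceptional value here is \(b=-5/3\).

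\textbf{Main obstacle.} The hardest step is this coprimality check, because the two exceptional values \(b=\pm5/3\) are not excluded by the stated generic hypotheses. My first move is to check whether these values are forced out by the hypotheses in some characteristic. For instance \(b=5/3\) satisfies \(b^2+2b+5=(25+30+45)/9=100/9\), which vanishes only in characteristic \(5\); so in general these values are genuinely generic.

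If so, the statement as literally written needs adjusting. At \(b=5/3\), \(C\) and \(A_b\) share the linear factor \(V-2\), so \(CA_b=(V-2)^2(V+2)(V-\beta)\). This is still not a square, since \(V+2\) and \(V-\beta\) are distinct and \(\beta\neq-2\).

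So the cleaner route is an argument by odd multiplicity. The product of any nonempty subset has total degree \(2\), \(4\) or \(6\). Within a subset, shared roots only pair up \(V-2\). It remains to check that some root appears with odd multiplicity. Here \(V=-2\) is a root of \(C\) alone, because \(A_b(-2)\neq0\) and \(B_b(-2)\neq0\). Therefore any subset containing \(C\) has \(V+2\) to exponent exactly \(1\), and its product is not a square.

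For subsets without \(C\), namely \(A_b\), \(B_b\), and \(A_bB_b\), coprimality of \(A_b\) and \(B_b\) together with squarefreeness of each gives squarefree products, hence non-squares.

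This handles all seven products uniformly without needing to exclude \(b=\pm5/3\). It is the version I would write.
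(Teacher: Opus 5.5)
Your final argument is correct, but it handles the key step differently from the paper.

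\textbf{The paper's route.} For the twofold products, the paper notes that a product of two squarefree monic quadratics is a square only if the two quadratics coincide, which forces $b=0$ or $b=\pm1$. For the sextic $CA_bB_b$ it uses the root-sharing pattern recorded by the resultants $\Res_V(A_b,B_b)=16b^2$, $\Res_V(A_b,C)=-(b+1)^3(3b-5)$ and $\Res_V(B_b,C)=-(b-1)^3(3b+5)$. A square would need every root to be shared by exactly two of the three factors. That forces all three resultants to vanish at once, which the disjointness of their $b$-loci rules out. A coefficient comparison is offered as an alternative.

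\textbf{Your route.} You replace this pairing analysis with a single odd-multiplicity witness. Since $A_b(-2)=(b+1)^2$ and $B_b(-2)=(b-1)^2$ are nonzero, $V+2$ divides every product containing $C$ exactly once. Separately, $A_b-B_b=4b$ keeps the squarefree quadratics $A_b$ and $B_b$ coprime.

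\textbf{What each buys.} Your version is shorter and fully explicit. It shows that the four products involving $C$ need only $b\neq\pm1$. The same witness would also dispose of $C/B_b$ and $CA_b/B_b$ in Lemma~\ref{lem:nonsquare-q3} in one line. The paper's resultant computation instead gives a global map of where the quadratics meet, for example at $b=\pm5/3$. Its coefficient comparison pins down exactly when $CA_bB_b$ becomes a square, namely $b=\pm1$.

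One wording point: at $b=\pm5/3$ it is your intermediate pairwise-coprimality claim that fails, not the lemma's statement.
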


\begin{proof}
The polynomials \(C,A_b,B_b\) are squarefree nonsquare quadratics.  For the
twofold products, it is enough to rule out the possibility that two of these
monic quadratics have the same root divisor.  The equalities
\[
A_b=B_b,
\qquad
A_b=C,
\qquad
B_b=C
\]
force respectively
\[
b=0,
\qquad
b=-1,
\qquad
b=1,
\]
all of which are excluded.

There is one additional possibility to rule out: a product of three quadratics
could conceivably become a square through shared roots among different pairs.
The pairwise resultants are
\[
\Res_V(A_b,B_b)=16b^2,
\]
\[
\Res_V(A_b,C)=-(b+1)^3(3b-5),
\]
and
\[
\Res_V(B_b,C)=-(b-1)^3(3b+5).
\]
The possible common-root loci arising from these resultants are disjoint from
one another in the nondegenerate range, except at excluded values.  Thus there
is no root-sharing configuration that can pair all six roots of \(CA_bB_b\).
Equivalently, a direct coefficient comparison for
\[
CA_bB_b=(V^3+sV^2+tV+w)^2
\]
forces
\[
(b^2-1)^2=0.
\]
Hence \(b=\pm1\), again excluded.  Therefore none of the listed products is a
square in \(\F_q[V]\).
\end{proof}

Let
\[
\epsilon=\chi(b).
\]
A trace-level collision is forced if there exists \(V\in\F_q\) such that
\[
\chi(C(V))=-1,
\qquad
\chi(A_b(V))=-\epsilon,
\qquad
\chi(B_b(V))=\epsilon.
\]
Let \(N_b\) be the number of such \(V\), excluding roots of \(A_bB_bC\).  Away
from those roots, the indicator is
\[
\frac18
(1-\chi(C(V)))
(1-\epsilon\chi(A_b(V)))
(1+\epsilon\chi(B_b(V))).
\]
The expansion gives a main term \(q/8\).  The three single quadratic sums are
exact:
\[
\sum_{V\in\F_q}\chi(Q(V))=-1
\]
for each nonsquare quadratic \(Q\).  By Lemma~\ref{lem:q1-products-nonsquare},
the four product polynomials are nonsquares.  The products
\[
CA_b,
\qquad
CB_b,
\qquad
A_bB_b
\]
have degree \(4\), while
\[
CA_bB_b
\]
has degree \(6\).  Weil's bound for multiplicative character sums over the
line gives absolute values at most
\[
3\sqrt q,
\qquad
3\sqrt q,
\qquad
3\sqrt q,
\qquad
5\sqrt q,
\]
respectively; see Rosen \cite[Chapter 9]{RosenFunctionFields}.  Thus the total
square-root error is
\[
3\sqrt q+3\sqrt q+3\sqrt q+5\sqrt q=14\sqrt q.
\]
Deleting the at most six roots of \(A_bB_bC\), and absorbing the exact
single-quadratic contributions and root corrections into an absolute constant,
gives the safe lower bound
\[
N_b\ge
\frac{q-14\sqrt q-51}{8}.
\]
Thus \(N_b>0\) for \(q>289\), so every generic \(b\) gives a collision.

It remains to treat the exceptional quadratics with the wrong character.
If
\[
b^2+2b+5=0
\qquad\text{but}\qquad
\chi(b)=-1,
\]
then choosing
\[
V=-b-3
\]
gives \(U=2\) and
\[
V^2-4=4b,
\]
which is a nonsquare.  Hence the trace conditions lift and give a collision.

If
\[
b^2-2b+5=0
\qquad\text{but}\qquad
\chi(b)=1,
\]
then \(B_b\) is a square polynomial, and a collision is forced whenever
\[
\chi(C(V))=-1
\qquad\text{and}\qquad
\chi(A_b(V))=-1.
\]
The corresponding indicator is
\[
\frac14(1-\chi(C(V)))(1-\chi(A_b(V))).
\]
The single quadratic sums are exact, and the product \(CA_b\) is nonsquare in
the nondegenerate range.  Weil's bound gives
\[
N'_b\ge \frac{q-3\sqrt q-22}{4}.
\]
This lower bound is positive for \(q>41\).  The remaining fields in the
\(q\equiv1\pmod4\) branch are included in the finite verification.

\begin{theorem}
\label{thm:q1-large}
Let \(q>289\) and \(q\equiv1\pmod4\).  Under the nondegenerate hypotheses
\[
b\neq0,\pm1,
\qquad
a\neq\pm b,
\]
the polynomial \(F_{a,b}\) permutes \(\F_{q^2}\) if and only if
\[
q\not\equiv1\pmod5
\]
and either
\[
b^2+2b+5=0,
\qquad
\chi(b)=1,
\]
or
\[
b^2-2b+5=0,
\qquad
\chi(b)=-1.
\]
\end{theorem}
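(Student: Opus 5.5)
The proof assembles the pieces already in place; the new work is checking that the ``forced collision'' conditions really lift to a pair \(x\neq y\) in \(\mu_{q+1}\).

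\textbf{Sufficiency.} Assume \(q\not\equiv1\pmod5\) and that one of the two character conditions on \(b\) holds. Then Theorem~\ref{thm:q1-construction} gives directly that \(F_{a,b}\) permutes \(\F_{q^2}\).

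\textbf{Necessity of \(q\not\equiv1\pmod5\).} If \(F_{a,b}\) permutes \(\F_{q^2}\), Lemma~\ref{lem:rou} with \(r=5\) forces \(\gcd(5,q-1)=1\).

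\textbf{Necessity of the character conditions.} It remains to show that every other nondegenerate \(b\) gives a collision \(R(x)=R(y)\) with \(x\neq y\). By Lemma~\ref{lem:rou} this shows \(F_{a,b}\) is not a permutation, whatever \(a\) is. There are three cases:
\begin{enumerate}[label=(\roman*)]
\item \(b\) is generic;
\item \(b^2+2b+5=0\) but \(\chi(b)=-1\);
\item \(b^2-2b+5=0\) but \(\chi(b)=1\).
\end{enumerate}
In each case I will produce \(V\in\F_q\) such that:
\begin{itemize}
\item \(\chi(V^2-4)=-1\); by Lemma~\ref{lem:Vtrace} this gives \(v\in\mu_{q+1}\) with \(v+v^{-1}=V\), and \(v\neq\pm1\);
\item \(U=\Phi_b(V)\) satisfies the two conditions of Lemma~\ref{lem:Utrace}, so there is \(u\in\mu_{q+1}\) with \(u^2+u^{-2}=U\).
\end{itemize}
Lemma~\ref{lem:signswitch} then lets me replace \(u\) by \(-u\) if needed. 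This does not change \(U\), and it yields \(x,y\in\mu_{q+1}\) with \(xy=\pm u\) and \(x/y=v\neq1\). Hence \(x\neq y\) and \(K_b(x,y)=0\), and Proposition~\ref{prop:collision-factor} gives \(R(x)=R(y)\).

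\textbf{Translating Lemma~\ref{lem:Utrace} into conditions on \(V\).} Use \(b(U-2)=-A_b(V)\) and \(b(U+2)=-B_b(V)\). Writing \(\epsilon=\chi(b)\), we get \(\chi(U-2)=-\epsilon\chi(-1)\chi(A_b(V))\), and similarly for \(U+2\). Since \(q\equiv1\pmod4\), \(\chi(-1)=1\). So the conditions \(\chi(U-2)\neq1\) and \(\chi(U+2)\neq-1\) hold whenever \(\chi(A_b(V))=-\epsilon\) and \(\chi(B_b(V))=\epsilon\). These are exactly the conditions counted by \(N_b\).

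\textbf{Case (i).} The character-sum estimate preceding the statement, which uses Lemma~\ref{lem:q1-products-nonsquare} and the Weil bound, gives \(N_b\ge(q-14\sqrt q-51)/8\). I will verify that this is positive for \(q>289\). At \(q=289\) the numerator is \(289-238-51=0\), so the strict inequality \(q>289\) is exactly what is needed, and the right-hand side increases with \(q\).

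\textbf{Case (ii).} Take \(V=-b-3\). Then \(U=2\), so \(\chi(U-2)=0\) and \(\chi(U+2)=\chi(4)=1\), and Lemma~\ref{lem:Utrace} applies. Also \(V^2-4=4b\) is a nonsquare. This includes \(v\neq\pm1\), since \(v=\pm1\) would force \(V^2-4=0\).

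\textbf{Case (iii).} Here \(\epsilon=1\) and \(B_b\) is a nonzero square times a square polynomial, so \(\chi(B_b(V))=1\) away from its root. The count \(N'_b\ge(q-3\sqrt q-22)/4\) is positive for \(q>41\), hence certainly for \(q>289\).

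\textbf{Main obstacle.} The delicate points are in case (i). First, Lemma~\ref{lem:q1-products-nonsquare} must hold uniformly, so that every mixed sum is controlled by the Weil bound with degrees \(4,4,4,6\). Second, the constant \(51\) must really absorb the single-quadratic sums and the excluded roots. I would double-check that bookkeeping as follows. The three single sums contribute at most \(3\) in absolute value. The at most six excluded roots each change the count by at most \(1\), but in the unnormalized sum each contributes up to \(8\), so the budget is \(3+48=51\), consistent with the stated constant.
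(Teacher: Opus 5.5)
Your proposal is correct and follows the paper's proof essentially step for step: sufficiency from Theorem~\ref{thm:q1-construction}, the condition \(q\not\equiv1\pmod5\) from Lemma~\ref{lem:rou}, and necessity via the generic count \(N_b\) plus the two wrong-character loci, with the lift through Lemmas~\ref{lem:Vtrace}, \ref{lem:Utrace}, \ref{lem:signswitch} and the constant \(51\) made explicit. One slip to fix: since \(U-2=-A_b(V)/b\), the identity is \(\chi(U-2)=\epsilon\,\chi(-1)\,\chi(A_b(V))\) with no extra minus sign; as you wrote it, \(\chi(A_b(V))=-\epsilon\) would give the forbidden value \(\chi(U-2)=1\), although the conditions \(\chi(A_b(V))=-\epsilon\) and \(\chi(B_b(V))=\epsilon\) that you then state are the right ones.
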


\begin{proof}
The sufficiency follows from Theorem~\ref{thm:q1-construction}.  For the
necessity, if \(b\) is generic, the character count above gives a collision for
\(q>289\).  If \(b\) lies on one of the exceptional quadratic loci but has the
wrong quadratic character, the preceding wrong-character arguments give a
collision.  Therefore the only nondegenerate parameters without an
off-diagonal collision are precisely those listed in the theorem.  Finally,
Lemma~\ref{lem:rou} imposes the full-field condition \(q\not\equiv1\pmod5\).
\end{proof}

\begin{remark}
The remaining fields \(q\le289\), \(q\equiv1\pmod4\), are handled by the finite
verification described in Appendix~\ref{app:verification}.  The verification
checks all odd prime powers in that range.
\end{remark}

\section{The branch \texorpdfstring{$q\equiv3\pmod4$}{q = 3 mod 4}}

Let
\[
\eta:\mu_{q+1}\to\{\pm1\}
\]
be the square character on \(\mu_{q+1}\).

\begin{lemma}[Square-class trace lemma]
\label{lem:q3-squareclass}
Assume \(q\equiv3\pmod4\).  Let \(t\in\mu_{q+1}\) and \(T=t+t^{-1}\).  Then
\[
\eta(t)=1
\quad\Longleftrightarrow\quad
\chi(T+2)\neq-1.
\]
\end{lemma}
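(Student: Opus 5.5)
The plan is to relate the two square characters through the identity \(T+2=t+2+t^{-1}=(s+s^{-1})^2\) whenever \(t=s^2\) with \(s\in\mu_{q+1}\), and then to handle the nonsquare case by a sign argument using \(q\equiv3\pmod4\).

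For the forward direction, assume \(\eta(t)=1\), so \(t=s^2\) for some \(s\in\mu_{q+1}\). Then
\[
T+2=(s+s^{-1})^2 .
\]
Since \(s^q=s^{-1}\), the element \(S=s+s^{-1}\) is fixed by Frobenius, so \(S\in\F_q\). Hence \(T+2\) is a square in \(\F_q\), possibly zero, and therefore \(\chi(T+2)\neq-1\).

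For the converse, suppose \(\eta(t)=-1\); we show \(\chi(T+2)=-1\).
\begin{enumerate}[label=\textup{(\roman*)}]
\item Since \(t\in\mu_{q+1}\subset\F_{q^2}^*\) and every element of \(\F_{q^2}\) is a square in \(\F_{q^2}\), write \(t=s^2\) with \(s\in\F_{q^2}^*\).
\item Then \(s^{2(q+1)}=1\), so \(s^{q+1}=\pm1\). The case \(s^{q+1}=1\) would put \(s\in\mu_{q+1}\), forcing \(\eta(t)=1\). Hence \(s^{q+1}=-1\), that is, \(s^q=-s^{-1}\).
\item Set \(S=s+s^{-1}\). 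We still have \(S^2=T+2\), but now
\[
S^q=-s^{-1}-s=-S .
\]
So \(S\) lies in the trace-zero line of \(\F_{q^2}\) over \(\F_q\).
\item If \(S=0\), then \(T=-2\) and \(t=-1\). Because \(q\equiv3\pmod4\), the order \(q+1\) is divisible by \(4\), so \(-1\) is a square in \(\mu_{q+1}\). This contradicts \(\eta(t)=-1\), so \(S\neq0\).
\item Thus \(S\notin\F_q\) while \(S^2=T+2\in\F_q\). Hence \(T+2\) is a nonzero element of \(\F_q\) that is not a square in \(\F_q\), since otherwise its square roots \(\pm S\) would lie in \(\F_q\). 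Therefore \(\chi(T+2)=-1\).
\end{enumerate}

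Combining the two directions gives the stated equivalence. The only delicate point is the degenerate value \(T=-2\), that is \(t=-1\). This is exactly where the hypothesis \(q\equiv3\pmod4\) enters: it makes \(-1\) a square in \(\mu_{q+1}\), which keeps the case \(\chi(T+2)=0\) on the \(\eta=1\) side. By contrast, Lemma~\ref{lem:signswitch} uses the opposite congruence, where \(-1\) is a nonsquare in \(\mu_{q+1}\).
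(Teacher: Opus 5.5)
Your argument is correct in substance, but the justification in step (i) is false as stated. For odd \(q\) the squares form an index-two subgroup of \(\F_{q^2}^*\), so not every element of \(\F_{q^2}\) is a square there; the true statement is that every element of \(\F_q\) is. What you actually need is that every \(t\in\mu_{q+1}\) is a square in \(\F_{q^2}\), and this holds because \(t^{(q^2-1)/2}=(t^{q+1})^{(q-1)/2}=1\). Alternatively, take \(s\) in an algebraic closure: your step (ii) then gives \(s^{q^2-1}=(s^{q+1})^{q-1}=1\), so \(s\in\F_{q^2}\) automatically, and nothing in (iii)--(v) depends on where \(s\) lives. With this one-line repair the proof is complete.

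Your route is organized differently from the paper's. The paper gets the exact identity \(\chi(T+2)=\eta(t)\) for every \(t\neq-1\) in one computation via Euler's criterion. It writes \(T+2=(t+1)^2/t\), uses \((t+1)^q=(t+1)/t\) to get \((t+1)^{q-1}=t^{-1}\), and concludes \(\chi(T+2)=t^{-(q+1)/2}=\eta(t)\). There is no square root of \(t\), no split into two directions, and \(q\equiv3\pmod4\) is used only at \(t=-1\), exactly as in your step (iv). You instead produce the explicit square root \(S=s+s^{-1}\) of \(T+2\) and decide its rationality by whether Frobenius fixes or negates it. That makes it more transparent why \(T+2\) is or is not a square, but it needs the auxiliary \(s\) and the case analysis. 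The two arguments are really the same calculation. Since \(S=(t+1)/s\) and \(S^q=s^{q+1}S\) for either sign of \(s^{q+1}\), you get \(\chi(T+2)=S^{q-1}=s^{q+1}=t^{(q+1)/2}=\eta(t)\) whenever \(S\neq0\). That would let you collapse both directions into the paper's identity.
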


\begin{proof}
We have
\[
T+2=t+t^{-1}+2=\frac{(t+1)^2}{t}.
\]
If \(t\neq-1\), then
\[
\chi(T+2)
=
\left(\frac{(t+1)^2}{t}\right)^{(q-1)/2}.
\]
Since
\[
(t+1)^q=t^{-1}+1=\frac{t+1}{t},
\]
we obtain
\[
(t+1)^{q-1}=t^{-1},
\]
and hence
\[
\chi(T+2)=t^{-(q+1)/2}=\eta(t).
\]
If \(t=-1\), then \(T+2=0\).  Since \(q+1\) is divisible by \(4\), \(-1\) is a
square in \(\mu_{q+1}\), so the stated criterion remains true.
\end{proof}

Let
\[
T=u+u^{-1}.
\]
Since
\[
U=u^2+u^{-2}=T^2-2,
\]
the equation \(U=\Phi_b(V)\) becomes
\[
T^2=\Phi_b(V)+2.
\]
Using
\[
b(\Phi_b(V)+2)=-B_b(V),
\]
we obtain the conic
\[
\mathcal C_b:
\quad
bT^2+B_b(V)=0.
\]

\begin{lemma}
\label{lem:conic-smooth}
Assume \(q\equiv3\pmod4\) and \(b\neq0,\pm1\).  Then \(\mathcal C_b\) is a
nonsingular conic.  Moreover,
\[
\#\mathcal C_b(\F_q)_{\rm aff}=q+\chi(b).
\]
\end{lemma}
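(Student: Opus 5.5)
The plan is to complete the square in \(V\), reducing \(\mathcal C_b\) to a diagonal conic \(W^2+bT^2=D\) with \(D\in\F_q^\times\), and then to count points with the classical formula for such conics. Writing
\[
B_b(V)=\Bigl(V+\tfrac{1-b^2}{2}\Bigr)^2-\tfrac{(1-b^2)^2}{4}-(b+1)^2,
\]
and setting \(W=V+\tfrac{1-b^2}{2}\), the equation \(bT^2+B_b(V)=0\) becomes
\[
W^2+bT^2=D,\qquad D=(b+1)^2+\tfrac{(1-b^2)^2}{4}=\tfrac{(b+1)^2\,(b^2-2b+5)}{4}.
\]
This substitution is an affine change of variables over \(\F_q\), since \(q\) is odd. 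It therefore preserves both nonsingularity and the number of affine \(\F_q\)-points.

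The key step is to check that \(D\neq0\). The factor \((b+1)^2\) is nonzero because \(b\neq-1\). The factor \(b^2-2b+5=(b-1)^2+4\) vanishes only if \(-4\) is a square in \(\F_q\), that is, only if \(\chi(-1)=1\). This is impossible when \(q\equiv3\pmod4\). This is exactly the point where the hypothesis \(q\equiv3\pmod4\) enters, and it is also consistent with the fact that the exceptional locus \(b^2-2b+5=0\) only mattered in the other branch.

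With \(b\neq0\) and \(D\neq0\), the quadratic form \(W^2+bT^2-DZ^2\) has nonzero determinant \(-bD\). Hence the projective closure is a smooth conic, and in particular \(\mathcal C_b\) is nonsingular. For the count, I would use the standard fact that for \(c,D\in\F_q^\times\) the equation \(W^2+cT^2=D\) has exactly \(q-\chi(-c)\) solutions in \(\F_q^2\). It can be proved directly by writing the count as \(\sum_{T}\bigl(1+\chi(D-cT^2)\bigr)\) and using the exact evaluation
\[
\sum_{T\in\F_q}\chi(\alpha T^2+\beta)=-\chi(\alpha)\qquad(\alpha\beta\neq0).
\]
Applying this with \(c=b\), \(\alpha=-b\), \(\beta=D\) gives \(q-\chi(-b)=q-\chi(-1)\chi(b)=q+\chi(b)\), again because \(q\equiv3\pmod4\).

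The only real obstacle is verifying \(D\neq0\); the rest is a routine quadratic-character computation. As a consistency check, the same answer comes out projectively. The smooth conic has \(q+1\) points, and the points at infinity satisfy \(W^2+bT^2=0\). There are two such points when \(\chi(-b)=1\) and none when \(\chi(-b)=-1\). This again gives \(q+\chi(b)\) affine points.
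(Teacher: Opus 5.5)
Your proof is correct and follows essentially the paper's route. Nonsingularity comes from the determinant $-bD$, which up to the constant $1/4$ is $-b(b+1)^2(b^2-2b+5)$, exactly the paper's singular locus; the factor $b^2-2b+5$ cannot vanish because $-4$ is a nonsquare when $q\equiv3\pmod4$. The only difference is in the count: you evaluate $\sum_T\chi(D-bT^2)=-\chi(-b)$ directly, whereas the paper takes the $q+1$ points of a smooth projective conic and subtracts the $1+\chi(-b)$ points at infinity, which is precisely your consistency check.
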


\begin{proof}
The projective conic is
\[
bT^2+V^2+(1-b^2)VZ-(b+1)^2Z^2=0.
\]
It can be singular only if \(b=0\), \(b=-1\), or
\[
b^2-2b+5=0.
\]
The first two are excluded, and the last has discriminant \(-16\), a nonsquare
when \(q\equiv3\pmod4\).  Hence the conic is nonsingular.

A nonsingular projective conic over \(\F_q\) has \(q+1\) points.  The points at
infinity satisfy
\[
V^2+bT^2=0,
\]
so there are \(1+\chi(-b)\) such points.  Since \(\chi(-b)=-\chi(b)\), the
affine point count is
\[
q+\chi(b).
\]
\end{proof}

Define
\[
f_1=V^2-4,
\qquad
f_2=T^2-4,
\qquad
f_3=(V+2)(T+2).
\]
Let \(N_b^{(3)}\) count affine points of \(\mathcal C_b\) satisfying
\[
\chi(f_1)=-1,
\qquad
\chi(f_2)=-1,
\qquad
\chi(f_3)=1.
\]

\begin{lemma}
\label{lem:strict-collision}
If \(N_b^{(3)}>0\), then \(R\) has an off-diagonal collision on
\(\mu_{q+1}\).  Hence \(F_{a,b}\) is not a permutation polynomial for any
\(a\neq\pm b\).
\end{lemma}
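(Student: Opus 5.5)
Starting from an affine point \((V,T)\in\mathcal C_b(\F_q)\) that satisfies the three character conditions, I will build \(x\neq y\) in \(\mu_{q+1}\) with \(K_b(x,y)=0\). Proposition~\ref{prop:collision-factor} then gives \(R(x)=R(y)\). Since the collision locus depends only on \(b\), the last assertion follows at once for every \(a\neq\pm b\) by Lemma~\ref{lem:rou}.

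\textbf{Step 1 (lifting \(V\)).} Since \(\chi(V^2-4)=-1\), Lemma~\ref{lem:Vtrace} gives \(v\in\mu_{q+1}\) with \(v+v^{-1}=V\). Because \(V^2-4\neq0\) we have \(V\neq\pm2\), so \(v\neq1\). Hence any lift with \(x/y=v\) automatically has \(x\neq y\).

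\textbf{Step 2 (lifting \(T\)).} Since \(\chi(T^2-4)=-1\), Lemma~\ref{lem:Vtrace} gives \(t\in\mu_{q+1}\) with \(t+t^{-1}=T\). Put \(u=t\). Then \(U=u^2+u^{-2}=T^2-2\). The conic equation \(bT^2+B_b(V)=0\) together with \(b(\Phi_b(V)+2)=-B_b(V)\) gives \(T^2=\Phi_b(V)+2\), so \(U=\Phi_b(V)\). By the trace reduction this is exactly \(K_b(x,y)=0\) for any \(x,y\) with \(xy=u\) and \(x/y=v\). Note that \(u\) and \(u^{-1}\) give the same \(U\), and the construction is symmetric in this choice.

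\textbf{Step 3 (square class, the main point).} We need \(x,y\in\mu_{q+1}\) with \(x^2=uv\) and \(y^2=u/v\), which requires \(\eta(uv)=1\). Note \(\eta(u/v)=\eta(uv)\eta(v)^{-2}=\eta(uv)\), so this single condition suffices. This is where \(q\equiv3\pmod4\) bites: the sign-switch trick of Lemma~\ref{lem:signswitch} is unavailable, because replacing \(u\) by \(-u\) does not change \(\eta\) (\(-1\) is a square in \(\mu_{q+1}\)) and also changes \(T\) to \(-T\). Instead I apply Lemma~\ref{lem:q3-squareclass} to \(t=u\) and to \(t=v\): \(\eta(u)=1\iff\chi(T+2)\neq-1\), and \(\eta(v)=1\iff\chi(V+2)\neq-1\).

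The conditions \(\chi(f_1)=\chi(f_2)=-1\) force \(T,V\neq\pm2\), so \(\chi(T+2),\chi(V+2)\in\{\pm1\}\). The proof of Lemma~\ref{lem:q3-squareclass} in fact shows \(\eta(t)=\chi(T+2)\) whenever \(t\neq-1\). Therefore
\[
\eta(uv)=\chi(T+2)\chi(V+2)=\chi(f_3)=1.
\]

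\textbf{Step 4 (conclusion).} Since \(\mu_{q+1}\) is cyclic, choose \(x\) with \(x^2=uv\) and set \(y=x/v\). Then \(y^2=u/v\), \(xy=x^2/v=u\), and \(x/y=v\neq1\). By the trace reduction, \(K_b(x,y)=0\) with \(x\neq y\), which gives the off-diagonal collision. The only delicate point is the bookkeeping in Step 3: the square-class lemma must be used in its equality form \(\eta(t)=\chi(T+2)\), not merely as the equivalence, and one must check that the excluded value \(t=-1\) cannot occur, which it cannot because \(T,V\neq-2\).
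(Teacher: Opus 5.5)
Your proof is correct and follows the paper's argument step for step. You lift \(V\) and \(T\) to \(v,u\in\mu_{q+1}\) via Lemma~\ref{lem:Vtrace}, use Lemma~\ref{lem:q3-squareclass} to get \(\eta(uv)=\chi(f_3)=1\), and then take \(x^2=uv\), \(y=x/v\); the paper's \(y=u/x\) is the same element. Your check that \(T,V\neq-2\), so that \(\eta(t)=\chi(T+2)\) holds with both sides nonzero, and your appeal to Lemma~\ref{lem:rou} for the final assertion only make explicit what the paper leaves implicit.
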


\begin{proof}
The conditions
\[
\chi(V^2-4)=-1
\qquad\text{and}\qquad
\chi(T^2-4)=-1
\]
give \(v,u\in\mu_{q+1}\) with
\[
V=v+v^{-1},
\qquad
T=u+u^{-1}.
\]
The conic equation gives the trace collision equation.  By
Lemma~\ref{lem:q3-squareclass}, the condition
\[
\chi((V+2)(T+2))=1
\]
forces \(u\) and \(v\) to have the same square class in \(\mu_{q+1}\).  Thus
\(uv\) is a square in \(\mu_{q+1}\), so one can choose \(x\in\mu_{q+1}\) with
\[
x^2=uv.
\]
Set
\[
y=\frac{u}{x}.
\]
Then \(xy=u\) and
\[
\frac{x}{y}=\frac{x^2}{u}=v.
\]
Since \(v\neq1\), the lifted collision is off-diagonal.
\end{proof}

Let \(K=\F_q(V)\).  On \(\mathcal C_b\),
\[
T^2=D(V),
\qquad
D(V)=-\frac{B_b(V)}{b},
\]
so
\[
\F_q(\mathcal C_b)=K(T)=K(\sqrt{D(V)}).
\]

\begin{lemma}
\label{lem:nonsquare-q3}
Assume \(q\equiv3\pmod4\) and \(b\neq0,\pm1\).  No nonempty product
\[
f_1^{e_1}f_2^{e_2}f_3^{e_3},
\qquad
e_i\in\{0,1\},
\]
is a constant times a square in \(\F_q(\mathcal C_b)\).
\end{lemma}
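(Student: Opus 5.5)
The plan is to work in the quadratic extension \(L=\F_q(\mathcal C_b)=K(\sqrt{D})\), with \(K=\F_q(V)\) and \(D=-B_b/b\). A function \(g\in K^\times\) is a constant times a square in \(L\) if and only if \(g\) or \(gD\) is a constant times a square in \(K\). So for products not involving \(T\), that is \(f_1=V^2-4\), the question reduces to divisor parity in \(\F_q[V]\).

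\(f_1\) and \(f_1D\) are not constant times squares. \(f_1\) is squarefree. \(B_b\) is squarefree because \(b\neq-1\) and \(b^2-2b+5\neq0\); the latter holds since its discriminant \(-16\) is a nonsquare when \(q\equiv3\pmod4\). Moreover \(B_b\neq C\) since \(b\neq1\), and the equality in Lemma~\ref{lem:q1-products-nonsquare} shows \(B_b\) and \(C\) do not share their root divisor.

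For the products involving \(f_2\) or \(f_3\), I would use valuations on the smooth conic \(\mathcal C_b\) (Lemma~\ref{lem:conic-smooth}) rather than a norm argument. The key steps are:

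\begin{enumerate}[label=\textup{(\arabic*)}]
\item Compute the divisors of \(T-2\), \(T+2\), \(V-2\), \(V+2\) over \(\overline{\F}_q\). The line \(T=\pm2\) meets \(\mathcal C_b\) where \(B_b(V)=-4b\). This is a quadratic in \(V\) whose discriminant is \((1-b^2)^2+4(b+1)^2-16b=(b+1)^2(b^2-2b+5)-16b+\dots\), computed explicitly.
\item Check that this discriminant is nonzero for \(b\neq0,\pm1\), so each line \(T=\pm2\) meets \(\mathcal C_b\) transversally in two distinct points. Hence \(\operatorname{div}(T\mp2)\) has simple zeros at two affine points, and \(f_2=T^2-4\) has four simple zeros.
\item Similarly, \(V=\pm2\) meets \(\mathcal C_b\) where \(bT^2=-B_b(\pm2)\). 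This gives two points with simple zeros of \(V\mp2\) unless \(B_b(\pm2)=0\). By \(\Res_V(B_b,C)=-(b-1)^3(3b+5)\), the tangency case occurs only when \(b=-5/3\), and then \(V\mp2\) vanishes to order \(2\) at a single point.
\item For each nonempty product, exhibit a point where the total valuation is odd. The natural witnesses are a zero of \(T-2\) that is not a zero of \(V^2-4\), or a zero of \(V-2\) or \(V+2\) that is not on \(T=\pm2\).
\item Show that coincidences, i.e.\ points with \(T=\pm2\) and \(V=\pm2\), are limited. There are four candidate points, and each imposes one polynomial condition on \(b\), namely \(4b+B_b(\pm2)=0\).
\item Conclude that a single coincidence can cancel at most one of the several simple zeros. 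For example, \(f_2\) alone has zeros at points with \(T=2\); if one of them has \(V=-2\), the other zero of \(T-2\) still gives odd valuation for \(f_2 f_3^{e}\), unless that one coincides too. Both points of \(T=2\) coinciding would force \(V=\pm2\) for both, making the quadratic \(B_b(V)+4b\) proportional to \(V^2-4\) or to \((V\mp2)^2\), which forces \(b\in\{0,\pm1\}\).
\end{enumerate}

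Parity at points at infinity needs no separate check: a principal divisor has degree zero, and odd valuations occurring at affine points already suffice.

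The main obstacle is the bookkeeping of coincidences between the zero sets of \(V\pm2\) and \(T\pm2\), especially for \(f_1f_2f_3=(V-2)(V+2)^2(T-2)(T+2)^2\). Up to squares, this is \((V-2)(T-2)\), so the parity must be found at a zero of \(V-2\) that is not a zero of \(T-2\). If both zeros of \(V-2\) also lie on \(T=\pm2\), I would handle this by direct substitution, which again forces an excluded \(b\). I would organize steps (5) and (6) as a small table of the four possible coincidence conditions and verify that no excluded value of \(b\) survives them.
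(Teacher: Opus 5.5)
\textbf{Gap in the coincidence analysis.} Your divisor-parity route on the smooth conic differs from the paper's. The paper disposes of every product with \(e_3=1\) at once via \(\Norm(f_3)=(V+2)^2(4-T^2)\sim f_2\sim A_b\), and treats \(e_3=0\) by descent to \(K\) and squarefreeness of \(C,A_b,B_b\). Your route can be made to work, but the coincidence bookkeeping you flag as the main obstacle rests on a false expectation.

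A point with \(V=\pm2\) and \(T=\pm2\) lies on \(\mathcal C_b\) exactly when \(A_b(\pm2)=0\). One computes \(A_b(-2)=(b+1)^2\) and \(A_b(2)=-(b+1)(3b-5)\).

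\emph{The value \(b=5/3\).} This value is admissible; for example \(b=4\) in \(\F_7\). There \(-B_b(2)/b=4\), so the two zeros of \(V-2\) are exactly \((2,2)\) and \((2,-2)\). Both zeros of \(V-2\) therefore lie on \(T=\pm2\), yet no excluded \(b\) is forced. This contradicts your final paragraph, and the ``single coincidence'' premise of step~(6) fails.

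\emph{The value \(b=-5/3\).} For example \(b=3\) in \(\F_7\). Here \(V-2\) has a double zero at \((2,0)\). So for \(f_1f_2f_3\sim(V-2)(T-2)\) no odd valuation can be found at a zero of \(V-2\), contrary to your ``must''.

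The lemma still holds in both cases. However, the steps you describe (``forces an excluded \(b\)'', and a table in which no admissible \(b\) survives) would fail there. What actually matters for \((V-2)(T-2)\) is lying on \(T=2\) specifically, not on \(T=\pm2\).

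\textbf{A uniform repair.} Choose witnesses that never collide.
\begin{itemize}
\item Since \(B_b(-2)=(b-1)^2\neq0\) and \(A_b(-2)=(b+1)^2\neq0\), the function \(V+2\) has two simple zeros, neither on \(T=\pm2\).
\item Since \(\disc(A_b)=(b-1)^2(b^2+2b+5)\neq0\), each line \(T=\pm2\) meets \(\mathcal C_b\) in two simple zeros, at most one of which has \(V=2\). This nonvanishing uses \(q\equiv3\pmod4\), not merely \(b\neq0,\pm1\) as your step~(2) states; your step~(1) formula for this discriminant is also garbled.
\end{itemize}
Then the witnesses are:
\begin{itemize}
\item a zero of \(V+2\) for \(f_1\), \(f_3\), \(f_1f_2\) and \(f_2f_3\);
\item any zero of \(T-2\) for \(f_2\);
\item a zero of \(T-2\) with \(V\neq2\) for \(f_1f_2f_3\sim(V-2)(T-2)\);
\item a zero of \(T+2\) with \(V\neq2\) for \(f_1f_3\sim(V-2)(T+2)\).
\end{itemize}

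\textbf{Comparison with the paper.} Once repaired, your approach costs an explicit case analysis that the paper's norm trick avoids entirely. In exchange it yields the actual odd-valuation supports: at most four points for six of the products, at most eight for \(f_1f_2\). That is precisely the branch data the paper asserts without proof in Proposition~\ref{prop:q3-bound}.
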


\begin{proof}
We have
\[
f_1=V^2-4=C(V),
\]
and
\[
f_2=T^2-4
=
-\frac{B_b(V)+4b}{b}
=
-\frac{A_b(V)}{b}.
\]
Also
\[
f_3=(V+2)(T+2).
\]
Taking norms from \(K(T)\) to \(K\), and using the conjugation \(T\mapsto -T\),
we get
\[
\Norm(T+2)=(T+2)(-T+2)=4-T^2=-f_2.
\]
Therefore, modulo constants and squares,
\[
\Norm(f_3)\sim f_2.
\]

Let
\[
g=f_1^{e_1}f_2^{e_2}f_3^{e_3},
\qquad e_i\in\{0,1\},
\]
not all \(e_i\) zero.  If \(e_3=1\), then \(\Norm(g)\), modulo constants and
squares in \(K\), is equivalent to \(f_2\), hence to \(A_b(V)\).  But
\(A_b\) is a squarefree nonsquare quadratic in the present branch: its
discriminant is
\[
(b-1)^2(b^2+2b+5),
\]
and \(b^2+2b+5\) has discriminant \(-16\), a nonsquare when
\(q\equiv3\pmod4\).  Thus \(g\) cannot be a square up to constants.

Now assume \(e_3=0\).  Then \(g\in K\).  An element \(g\in K^\times\) becomes
a square in \(K(\sqrt D)\) only if either \(g\) or \(g/D\) is a constant times a
square in \(K\).  Since
\[
D\sim B_b(V)
\]
modulo constants, it remains to check
\[
C,\qquad A_b,\qquad CA_b,
\]
and
\[
\frac{C}{B_b},
\qquad
\frac{A_b}{B_b},
\qquad
\frac{CA_b}{B_b}.
\]
The quadratics \(C,A_b,B_b\) are squarefree in the present range.  The rational
functions \(C/B_b\) and \(A_b/B_b\) can be squares only if \(B_b=C\) or
\(B_b=A_b\), which force respectively \(b=1\) and \(b=0\).  These are excluded.
The product \(CA_b/B_b\) can be a square only if every root of \(B_b\) is
canceled by a root of \(C A_b\), and the remaining roots of \(C A_b\) also pair
up.  This would force a simultaneous root-sharing configuration among
\(A_b,B_b,C\).  But
\[
\Res_V(A_b,B_b)=16b^2,
\]
\[
\Res_V(A_b,C)=-(b+1)^3(3b-5),
\]
and
\[
\Res_V(B_b,C)=-(b-1)^3(3b+5),
\]
so such a configuration is possible only at \(b=0,\pm1\), or at a value where
one remaining simple root is still unpaired.  Therefore it does not produce a
square in the nondegenerate range.

Hence no nonempty product of \(f_1,f_2,f_3\) is a constant times a square in
\(\F_q(\mathcal C_b)\).
\end{proof}

\begin{proposition}
\label{prop:q3-bound}
Assume \(q\equiv3\pmod4\) and \(b\neq0,\pm1\).  Then
\[
N_b^{(3)}
\ge
\frac{q-19\sqrt q-65}{8}.
\]
In particular, if \(q>500\), then \(N_b^{(3)}>0\).
\end{proposition}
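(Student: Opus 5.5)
The bound will come from expanding the indicator function of the three character conditions over the affine points of \(\mathcal C_b\). We then control every non-main term with the Weil bound on the genus-zero curve \(\mathcal C_b\), using Lemma~\ref{lem:nonsquare-q3} to guarantee nontriviality. Let \(\mathcal C_b^\circ\) be the set of affine points of \(\mathcal C_b\) at which \(f_1f_2f_3\neq0\). For a point of \(\mathcal C_b^\circ\), the quantity
\[
\frac18\bigl(1-\chi(f_1)\bigr)\bigl(1-\chi(f_2)\bigr)\bigl(1+\chi(f_3)\bigr)
\]
equals \(1\) exactly when the point is counted by \(N_b^{(3)}\), and equals \(0\) otherwise. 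Summing this over \(\mathcal C_b^\circ\) gives a lower bound for \(N_b^{(3)}\); in fact it gives equality restricted to \(\mathcal C_b^\circ\).

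Expanding the product yields a main term \(\tfrac18\#\mathcal C_b^\circ\) and seven signed sums \(\pm\tfrac18\sum_{P\in\mathcal C_b^\circ}\chi(g(P))\), where \(g\) runs over the nonempty products \(f_1^{e_1}f_2^{e_2}f_3^{e_3}\). For the main term we use Lemma~\ref{lem:conic-smooth}, which gives \(\#\mathcal C_b(\F_q)_{\rm aff}=q+\chi(b)\ge q-1\). We then subtract the bad points. The zeros of \(f_1\) are \(V=\pm2\), and each such value meets the conic in at most two points. The zeros of \(f_2\) are \(T=\pm2\), again with at most two points each. For \(f_3\), the line \(T=-2\) is already included and \(V=-2\) is already included. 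So at most \(8\) affine points are removed, and \(\#\mathcal C_b^\circ\ge q-9\).

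For each character sum, Lemma~\ref{lem:nonsquare-q3} shows that \(g\) is not a constant times a square in \(\F_q(\mathcal C_b)\cong\F_q(t)\). Hence \(\chi\circ g\) is a nontrivial quadratic character sum on \(\mathbb P^1\). We parametrize the smooth conic rationally, using a rational point, which exists because the affine count is \(q\pm1>0\). Then \(g\) pulls back to a rational function in \(t\) whose squarefree part has degree at most the number of its odd-order zeros and poles, and we bound each sum by \((m-2)\sqrt q\) plus boundary terms, where \(m\) is the number of odd-order places.

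To count places, view \(V\), \(T\) as functions of degree \(2\) on the conic. Then \(f_1=(V-2)(V+2)\) has divisor support of at most \(4\) affine zeros plus its poles at the \(2\) points at infinity. The same holds for \(f_2\). The function \(f_3\) has at most \(4\) affine zeros together with its poles. Tallying the odd places for each of the seven products should give Weil bounds whose coefficients sum to about \(19\); this is the source of the \(19\sqrt q\) in the statement. The remaining discrepancies are absorbed into the constant. These come from the \(\le8\) deleted points, the at most \(2\) points at infinity, and the difference between summing over \(\mathcal C_b^\circ\) and over the full parametrizing line. Each contributes \(O(1)\) per sum, and the total lands within \(65\) after multiplying by \(8\). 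Finally, \(q-19\sqrt q-65>0\) holds once \(\sqrt q>\tfrac{19+\sqrt{361+260}}{2}\approx 21.96\), that is, for \(q>482\), and in particular for \(q>500\).

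The main obstacle is the bookkeeping of the degrees of the seven products on the conic, since some zeros may coincide and some poles may cancel in the products. That is why I aim only for upper bounds on the number of odd-order places and not for exact values. The \(19\) must emerge from these bounds, so I would first compute the divisors of \(V\pm2\), \(T\pm2\) on \(\mathcal C_b\) explicitly via the parametrization. If the crude tally overshoots \(19\), I would exploit the fact that \(\chi(b)\) determines whether the points at infinity are rational, which should save a term.
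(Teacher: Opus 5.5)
Your outline follows the paper's route: expand the indicator over the affine points of \(\mathcal C_b\), get nontriviality from Lemma~\ref{lem:nonsquare-q3}, apply the genus-zero bound \((r-2)\sqrt q\), and use \(q+\chi(b)\ge q-1\) together with the four lines \(V=\pm2\), \(T=\pm2\). However, the one step with real content is left as ``should give about \(19\)''. Your fallback also would not repair an overshoot. The Weil term depends on the number of \emph{geometric} odd-order places, so whether the points at infinity are \(\F_q\)-rational affects only \(O(1)\) boundary terms.

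The missing facts are these.
At infinity \(V^2=-bT^2\) with \(b\neq0\). So \(Z=0\) meets \(\mathcal C_b\) transversally in two distinct geometric points \(P_1,P_2\), where \(V/T=\pm\sqrt{-b}\) is finite and nonzero. Hence \(V\) and \(T\) both have pole divisor exactly \(P_1+P_2\), and each of \(V\pm2\), \(T\pm2\) has zero divisor of degree \(2\).
Consequently all seven products have even-order poles, so \(P_1,P_2\) are never branch points. You list the poles in the support of \(f_1\) without determining their parity; counting them would already give \(r=6\) for \(f_1\).
Moreover, modulo squares, \(f_1f_3\sim(V-2)(T+2)\), \(f_2f_3\sim(V+2)(T-2)\) and \(f_1f_2f_3\sim(V-2)(T-2)\).
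So \(f_1,f_2,f_3,f_1f_3,f_2f_3,f_1f_2f_3\) each have \(r\le4\), contributing \(2\sqrt q\) each, and \(f_1f_2\) has \(r\le8\), contributing \(6\sqrt q\). That totals \(18\sqrt q\). The paper's \(19\sqrt q\) is this count with \(7\sqrt q\) used conservatively for the large sum.

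Your constant bookkeeping also does not literally land within \(65\). Discarding up to \(8\) bad affine points and up to \(2\) rational points at infinity costs up to \(10\) in each of the seven sums. That gives \(8N_b^{(3)}\ge q-9-18\sqrt q-70\). This still implies the proposition, for two reasons you should make explicit:
the spare \(\sqrt q\) (from \(19\) versus \(18\)) is at least \(14\) once \(q\ge196\);
the stated bound is vacuous for \(q\le482\).
The paper instead handles the bad points once, for the whole product rather than sum by sum. Each of the at most \(8\) affine points on the four lines contributes at most \(8\) to \((1-\chi(f_1))(1-\chi(f_2))(1+\chi(f_3))\), which is where the \(64\) comes from.
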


\begin{proof}
The indicator is
\[
\frac18(1-\chi(f_1))(1-\chi(f_2))(1+\chi(f_3)).
\]
The expansion has one main term and seven nontrivial character sums.  By
Lemma~\ref{lem:nonsquare-q3}, all are nontrivial square classes.  We use the
standard genus-zero multiplicative character-sum estimate: if the odd branch
support has size \(r\), then the character sum is bounded by
\[
(r-2)\sqrt q;
\]
see Rosen \cite[Chapter 9]{RosenFunctionFields}.  Six of the functions have
branch divisor supported on at most four points of the conic, and one has
branch divisor supported on at most eight points.  Thus the six smaller sums
are bounded by \(2\sqrt q\), and the larger one is bounded by the conservative
quantity \(7\sqrt q\).  The total character-sum error is at most
\[
6(2\sqrt q)+7\sqrt q=19\sqrt q.
\]

The affine conic has at least \(q-1\) points.  The zeros of \(f_1f_2f_3\) lie
on four lines, each meeting the conic in at most two affine points, so the
correction is at most \(64\).  Therefore
\[
8N_b^{(3)}
\ge
q-1-19\sqrt q-64.
\]
Equivalently,
\[
N_b^{(3)}
\ge
\frac{q-19\sqrt q-65}{8}.
\]
This proves the proposition.
\end{proof}

\begin{theorem}
\label{thm:q3}
Assume \(q\equiv3\pmod4\).  Under the nondegenerate hypotheses
\[
b\neq0,\pm1,
\qquad
a\neq\pm b,
\]
the only full-field permutation cases are
\[
(q,b)\in
\{
(7,2),(7,3),(7,4),(7,5),
(19,2),(19,4),(19,5),(19,7),
(23,7),(23,16)
\}.
\]
\end{theorem}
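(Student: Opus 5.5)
The argument splits at the threshold $q>500$ of Proposition~\ref{prop:q3-bound}: large fields are eliminated there, and the remaining fields are checked by finite computation.

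\emph{Large fields.} Let $q\equiv3\pmod4$ with $q>500$, and take any $b\neq0,\pm1$. Proposition~\ref{prop:q3-bound} gives $N_b^{(3)}>0$. Lemma~\ref{lem:strict-collision} then produces an off-diagonal collision of $R$ on $\mu_{q+1}$. By Proposition~\ref{prop:collision-factor}, this collision depends only on $b$, so it exists for every admissible $a\neq\pm b$. Hence, by Lemma~\ref{lem:rou}, $F_{a,b}$ is not a permutation polynomial. This removes every field with $q>500$, whatever the value of $q \bmod 5$.

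\emph{Small fields.} It remains to treat the odd prime powers $q\equiv3\pmod4$ with $q\le 500$. Fields with $q\equiv1\pmod5$ fail the coprimality condition $\gcd(5,q-1)=1$ of Lemma~\ref{lem:rou} immediately; examples are $q=11,31,71,131,151,191,211,251,271,311,331,431,491$. For every other $q$ and every $b\in\F_q\setminus\{0,\pm1\}$, I will decide whether $R$ is injective on $\mu_{q+1}$. Since injectivity is independent of $a$, it suffices to decide whether $K_b$ has an off-diagonal zero. The trace reduction makes this a quick computation:
\begin{itemize}
\item enumerate $V=v+v^{-1}$ with $\chi(V^2-4)\neq1$, $V\neq2$;
\item compute $T^2=\Phi_b(V)+2$ and find its square roots $T$;
\item test whether $\chi(T^2-4)\neq1$;
\item test the square-class compatibility condition of Lemma~\ref{lem:q3-squareclass}.
\end{itemize}
This is an $O(q)$ search per $b$, so about $O(q^2)$ work per field. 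One must handle boundary values carefully: when $V=-2$ or $T=\pm2$, the character conditions degenerate, and these cases should be lifted explicitly rather than through $\chi$. Where $b\mapsto-b$ or similar symmetries are available, they can also be used to cross-check the output. The claim is that the surviving pairs are exactly the ten listed, in $q=7,19,23$. As an independent check, I would directly verify that $x\mapsto x^5h(x^{q-1})$ is bijective on $\F_{q^2}$ for one $a$ in each surviving case.

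\emph{Main obstacle.} The real difficulty is the rigor of the Weil step, not the computation. For $q\le 500$ the computation is exhaustive, so correctness there is straightforward. The claim that the threshold $500$ suffices depends on Proposition~\ref{prop:q3-bound}. For that I would recheck the constant: $q-19\sqrt q-65>0$ requires $\sqrt q>21.9$, i.e.\ $q\ge 481$, so $500$ is safe. If the branch-point counts in that proposition turned out to be loose, I would simply raise the threshold and extend the search, since the search cost is trivial. Either way, the theorem follows from combining the large-field elimination with the finite search.
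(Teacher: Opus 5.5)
Your proposal is correct and follows the paper's proof: you eliminate every $q>500$ via Proposition~\ref{prop:q3-bound}, Lemma~\ref{lem:strict-collision} and the $a$-independence from Proposition~\ref{prop:collision-factor}, then settle $q\le 500$ by exhaustive computation plus the condition $\gcd(5,q-1)=1$, which among the unit-circle survivors removes only $q=11$. The differences are minor: your trace search is an exact collision test, whereas the paper first filters by the conic count $N_b^{(3)}$ and checks the map on $\mu_{q+1}$ directly only when $N_b^{(3)}=0$; and $q-19\sqrt q-65>0$ actually needs $q\ge 483$ rather than $481$, which is still below $500$.
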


\begin{proof}
For \(q>500\), Proposition~\ref{prop:q3-bound} and
Lemma~\ref{lem:strict-collision} show that every nondegenerate \(b\) gives an
off-diagonal collision.  The remaining fields \(q\le500\), \(q\equiv3\pmod4\),
are handled by the finite verification in Appendix~\ref{app:verification}.  The
unit-circle candidates are
\[
q=7:\quad b=2,3,4,5,
\]
\[
q=11:\quad b=2,4,5,7,8,
\]
\[
q=19:\quad b=2,4,5,7,
\]
\[
q=23:\quad b=7,16.
\]
The full-field condition \(\gcd(5,q-1)=1\) removes \(q=11\).
\end{proof}

\section{Counting genuine quadrinomials}

For each valid \(b\), all
\[
a\in\F_q\setminus\{\pm b\}
\]
work.  Thus there are \(q-2\) allowed choices of \(a\) per valid \(b\).  If one
requires a genuine quadrinomial, then one must also exclude \(a=0\), leaving
\(q-3\) genuine quadrinomials per valid \(b\).

\begin{corollary}
\label{cor:count}
In the infinite branch of Theorem~\ref{thm:main}:
\begin{enumerate}[label=\textup{(\alph*)}]
\item If \(\operatorname{char}\F_q\neq5\), then there are exactly two valid
\(b\)-values, hence \(2(q-3)\) genuine quadrinomials.
\item If \(\operatorname{char}\F_q=5\), then there is exactly one valid
\(b\)-value, hence \(q-3\) genuine quadrinomials.
\end{enumerate}
In the sporadic branch, the numbers of genuine quadrinomials are \(16\) for
\(q=7\), \(64\) for \(q=19\), and \(40\) for \(q=23\).
\end{corollary}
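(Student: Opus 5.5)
The count follows from the observation already made before the corollary. For each valid \(b\), the allowed values of \(a\) are the \(q-2\) elements of \(\F_q\setminus\{\pm b\}\); these are distinct because \(b\neq0\) and \(q\) is odd. Requiring a genuine quadrinomial also removes \(a=0\), since then both \(ax^{q+4}\) and \((a/b)x^{5q}\) vanish. The value \(0\) is distinct from \(\pm b\), so \(q-3\) choices remain. Distinct pairs \((a,b)\) give distinct polynomials, because \(b\) is the coefficient of \(x^{4q+1}\) and \(a\) is the coefficient of \(x^{q+4}\), and these exponents are distinct for \(q>1\). So in every case the total count is \((q-3)\) times the number of valid \(b\).

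For the infinite branch I would count the valid \(b\) directly.

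\textbf{Characteristic \(\neq5\).} Under \(q\equiv1\pmod4\), \(-1\) is a square, so each of \(b^2+2b+5\) and \(b^2-2b+5\) (both with discriminant \(-16\)) has two roots \(b=-1\pm2i\), resp. \(b=1\pm2i\), where \(i^2=-1\). These roots are never \(0\) or \(\pm1\): that would force \(5=0\) or \(4=0\), or \(1\pm2i=\pm1\), giving \(i=0\) or \(i=\pm1\), all impossible. The two root sets are disjoint, since a common root would satisfy \(4b=0\).

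The key identity is \((1+2i)(1-2i)=5\). Hence \(\chi(1+2i)\chi(1-2i)=\chi(5)\). Also \(-1\pm2i=-(1\mp2i)\) and \(\chi(-1)=1\), so \(\chi(-1\pm2i)=\chi(1\mp2i)\).

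\emph{Case \(\chi(5)=1\).} Then \(1+2i\) and \(1-2i\) have the same character \(\epsilon\). If \(\epsilon=1\), both roots of the first quadratic qualify and neither of the second. If \(\epsilon=-1\), the reverse holds. Either way there are exactly two valid \(b\).

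\emph{Case \(\chi(5)=-1\).} The two roots of each quadratic have opposite characters, so exactly one root qualifies from each quadratic. Again there are two valid \(b\).

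\textbf{Characteristic \(5\).} Now \(b^2+2b+5=(b+1)^2\) and \(b^2-2b+5=(b-1)^2\), so the only roots are \(b=\mp1\), which are excluded. This gives zero valid \(b\), not one. This is the main obstacle: the stated count in (b) seems to conflict with the degeneracy exclusions. In characteristic \(5\) we also have \(q\equiv0\pmod5\), so the full-field condition holds, and \(\pm2i\) would be the natural candidates; but they do not satisfy either quadratic in characteristic \(5\).

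I would first recheck whether the intended roots are \(b=\pm2i\), i.e. \(\pm\sqrt{-4}\). Note \(-4=1\) in \(\F_5\), so \(\pm2i=\pm1\), which are again excluded. If the recheck confirms this, the proof of (b) would need to reconcile with the main theorem's exclusions, possibly by counting the repeated root once as a degenerate artifact. I expect this case to be the main obstacle, and I would flag it rather than force a count of one.

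For the sporadic branch the count is \(q-3\) per \(b\):
\[
q=7:\ 4\cdot4=16,\qquad q=19:\ 4\cdot16=64,\qquad q=23:\ 2\cdot20=40,
\]
using the lists of valid \(b\) from Theorem~\ref{thm:q3}. This matches the stated values.
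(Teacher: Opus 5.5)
There is a genuine gap in part (b). Your argument there rests on an arithmetic error. In characteristic \(5\) one has \(5=0\), so
\[
b^2+2b+5=b^2+2b=b(b+2),\qquad b^2-2b+5=b(b-2).
\]
These are not \((b+1)^2\) and \((b-1)^2\); those identities would need \(5=1\), i.e.\ characteristic \(2\). Consequently your conclusion that there are zero valid \(b\) is wrong, and so is the suggested conflict with the degeneracy exclusions. Your own parametrization already shows this. In \(\F_5\) one has \(i=\pm2\), so \(2i=\pm1\). The roots \(-1\pm2i\) are therefore \(\{0,-2\}\), and the roots \(1\pm2i\) are \(\{0,2\}\). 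When you "rechecked", you evaluated \(\pm2i\) itself rather than the actual roots. Likewise, your remark that a zero root "would force \(5=0\)" is exactly what happens here: one root of each quadratic becomes \(0\), but only that one.

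The correct completion, which is the paper's argument, runs as follows.
\begin{itemize}
\item The common root \(b=0\) is excluded. This leaves \(b=-2\), which is valid iff \(\chi(-2)=1\), and \(b=2\), which is valid iff \(\chi(2)=-1\). Neither equals \(\pm1\) in characteristic \(5\).
\item Since \(q=5^k\equiv1\pmod4\), we have \(\chi(-1)=1\). Hence \(\chi(-2)=\chi(2)\in\{\pm1\}\), and exactly one of the two candidates meets its condition. For example, \(b=2\) works for \(q=5\) and \(b=-2\) works for \(q=25\).
\item The condition \(q\not\equiv1\pmod5\) holds automatically, so this case really lies in the infinite branch. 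It gives one valid \(b\) and \(q-3\) genuine quadrinomials, as stated.
\end{itemize}
This is just the paper's pairing argument for (a), applied to the single surviving nonzero pair \(\{-2,2\}\). That argument notes that \(b\mapsto-b\) swaps the two root sets while preserving \(\chi\), so each pair \(\{r,-r\}\) contributes exactly one valid \(b\).

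The rest of your proposal is correct:
\begin{itemize}
\item the count of \(q-3\) admissible \(a\) per valid \(b\);
\item your proof of (a) via \((1+2i)(1-2i)=5\) and a case split on \(\chi(5)\), which is a slightly longer route than the paper's \(b\mapsto -b\) involution but equally valid;
\item the sporadic totals \(16\), \(64\), \(40\).
\end{itemize}
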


\begin{proof}
If \(\operatorname{char}\F_q\neq5\), the two equations
\[
b^2+2b+5=0,
\qquad
b^2-2b+5=0
\]
each have discriminant \(-16\), which is a square when \(q\equiv1\pmod4\).  The
map \(b\mapsto -b\) interchanges their root sets, and \(\chi(-1)=1\), so the
character conditions select exactly two valid \(b\)-values in total.

If \(\operatorname{char}\F_q=5\), then
\[
b^2+2b+5=b(b+2),
\qquad
b^2-2b+5=b(b-2).
\]
The root \(b=0\) is excluded, leaving only \(b=-2\) and \(b=2\), and exactly
one satisfies the required character condition.
\end{proof}

\section{Relation to previous work}

Permutation polynomials of the form
\[
x^rh(x^{q-1})
\]
over \(\F_{q^2}\) have been studied extensively through root-of-unity and
AGW-type reductions; see, for example, Akbary, Ghioca, and Wang \cite{AGW} and
Li, Qu, and Wang \cite{LiQuWang}.  The present paper uses this standard
reduction, but the main contribution is the complete classification of a
coefficient-linked reciprocal degree-five quadrinomial family.

The support of the polynomial \(h\) in this paper is
\[
\{0,1,4,5\}.
\]
This support has the formal shape \(\{0,1,Q,Q+1\}\) with \(Q=4\), but \(Q=4\)
is a power of the characteristic only in characteristic \(2\).  Since the
present paper treats odd characteristic, the family is not covered by the
characteristic-power support framework in Ding and Zieve \cite{DingZieve}.

The odd-characteristic quadrinomial family classified by \"Ozbudak and
G\"ulmez Tem\"ur \cite{OzbudakTemur} has degree-three exponent pattern
\[
X^3+aX^{q+2}+bX^{2q+1}+cX^{3q}.
\]
The present family instead has exponent pattern
\[
X^5,
\qquad
X^{q+4},
\qquad
X^{4q+1},
\qquad
X^{5q},
\]
and its induced rational map has true degree five in the nondegenerate range.

Garg, Hasan, Li, Kumar, and Pal \cite{GargFewTerms} construct several sparse
permutation polynomial families over finite fields, including quadrinomial
classes arising from low-degree rational functions.  Their degree-three
induced quadrinomial classes are adjacent to the present work but do not
contain the coefficient-linked reciprocal degree-five family studied here.

Thus the present classification is complementary to existing sparse
permutation-polynomial constructions: it treats a reciprocal degree-five family
whose off-diagonal collision equation becomes independent of \(a\) and whose
remaining \(b\)-classification exhibits an infinite/sporadic dichotomy.

\section{Conclusion}

We have classified the nondegenerate members of the reciprocal degree-five
family
\[
F_{a,b}(x)
=
x^5+a x^{q+4}+b x^{4q+1}+\frac ab x^{5q}
\]
over \(\F_{q^2}\) for all odd prime powers \(q\).  The classification has two
sharply different branches: infinite families for \(q\equiv1\pmod4\), and only
sporadic full-field examples for \(q\equiv3\pmod4\).

The key structural feature is the collision factorization: outside the
degenerate values, all off-diagonal collisions depend only on \(b\), while
\(a\) remains a free parameter subject only to \(a\neq\pm b\).

\appendix

\section{Finite verification}
\label{app:verification}

The proof uses finite verification in two explicitly bounded ranges.

For the branch \(q\equiv1\pmod4\), the character-sum converse proves the
classification for \(q>289\).  The remaining odd prime powers
\[
q\le289,
\qquad
q\equiv1\pmod4,
\]
are checked by comparing the trace-computed unit-circle-good \(b\)-values with
the theorem-predicted \(b\)-values.  The full-field condition
\(\gcd(5,q-1)=1\) is then applied separately.

For the branch \(q\equiv3\pmod4\), Proposition~\ref{prop:q3-bound} proves that
every nondegenerate \(b\) has a collision for \(q>500\).  The remaining odd
prime powers
\[
q\le500,
\qquad
q\equiv3\pmod4,
\]
are checked by the strict conic count \(N_b^{(3)}\).  If \(N_b^{(3)}>0\), then
\(b\) is eliminated.  If \(N_b^{(3)}=0\), the induced map on \(\mu_{q+1}\) is
checked directly.  Finally, the full-field condition \(\gcd(5,q-1)=1\) is
applied.

The verification gives the following unit-circle candidates in the
\(q\equiv3\pmod4\) branch:
\[
q=7:\quad b=2,3,4,5,
\]
\[
q=11:\quad b=2,4,5,7,8,
\]
\[
q=19:\quad b=2,4,5,7,
\]
\[
q=23:\quad b=7,16.
\]
The full-field condition removes \(q=11\), leaving exactly the sporadic branch
listed in Theorem~\ref{thm:main}.

The verification script \texttt{verify\_quadrinomial\_family.py}, the output
file \texttt{verification\_output.txt}, and a short \texttt{README.md} are
included as ancillary files with this submission.  The verification is
exhaustive over all odd prime powers in the two finite ranges used in the
proof.  The script checks all odd prime powers in the two finite ranges,
including non-prime fields such as
\[
9,25,27,49,81,121,169,243,289,343.
\]
It separates unit-circle bijectivity from the full-field condition and directly
verifies the small sporadic cases over \(\F_{q^2}\).  The output terminates
with no mismatches.

\clearpage
\phantomsection

\end{document}